\newtheorem{theorem}{Theorem}[section]
\newtheorem{lemma}[theorem]{Lemma}
\newtheorem{cor}[theorem]{Corollary}
\newtheorem{prop}[theorem]{Proposition}
\theoremstyle{definition}
\newtheorem{definition}[theorem]{Definition}
\theoremstyle{remark}
\newtheorem{remark}[theorem]{Remark}
\numberwithin{equation}{section}
\newcommand{\al}{\alpha}
\newcommand{\be}{\beta}
\newcommand{\ga}{\gamma}
\newcommand{\Ga}{\Gamma}
\newcommand{\ep}{\varepsilon}
\newcommand{\la}{\lambda}
\newcommand{\si}{\sigma}
\newcommand{\Si}{\Sigma}
\newcommand{\csi}{\xi}
\newcommand{\x}{\times}
\newcommand{\CC}{\mathcal C}
\newcommand{\imm}{{\mathrm {Imm}}}
\newcommand{\Z}{\mathbb Z}
\newcommand{\Q}{\mathbb Q}
\newcommand{\R}{\mathbb R}
\newcommand{\RP}{{\mathbb R}{P}}
\newcommand{\del}{\partial}
\newcommand{\co}{\colon\thinspace}
\begin{document}
\title{Cobordism Invariants of Fold Maps}

\author{Boldizs\'ar Kalm\'{a}r}

\address{Kyushu University, Faculty of Mathematics, 6-10-1 Hakozaki, Higashi-ku, Fukuoka 812-8581, Japan}
\email{kalmbold@yahoo.com}


\thanks{The author was supported by Canon Foundation in Europe.}

\subjclass[2000]{Primary 57R45; Secondary 57R75, 57R42, 55Q45}

\dedicatory{Dedicated to Professor Yoshifumi Ando on the occasion of his sixtieth birthday.}

\keywords{Fold singularity, fold map, immersion, cobordism, stable homotopy group}


\begin{abstract}
This is a survey paper of author's results on cobordism groups and semigroups
of fold maps and simple fold maps. The results include:
establishing a relation between fold maps and immersions through geometrical
invariants of cobordism classes of fold maps and simple fold maps in terms of immersions
with prescribed normal bundles, 
detecting stable homotopy groups of spheres as direct summands of the cobordism semigroups of fold maps,
Pontryagin-Thom type construction for $-1$ codimensional fold maps and  
estimations about the cobordism classes
of manifolds which have fold maps into stably parallelizable manifolds.
In the last section some of these results are extended and
we show that our invariants also detect stable homotopy groups of the classifying spaces $BO(k)$ as direct 
summands of the cobordism semigroups of fold maps.
\end{abstract}

\maketitle

\section*{Introduction}

Fold maps of $(n+q)$-dimensional manifolds into $n$-dimensional manifolds have the formula 
\[
f(x_1, \ldots, x_{n+q})=(x_1, \ldots, x_{n-1}, \pm x_n^2 \pm \cdots \pm x_{n+q}^2 )
\]
as a local form around each singular point, and  
the subset of the singular points in the source manifold is a $(q+1)$-codimensional submanifold 
(for results about fold maps, see, for example, \cite{An2, An5, An3, An, Eli1, Eli2, Kal5, Sad, Sa5, Sa}).
If we restrict a fold map to the set of its singular points, then we obtain a
codimension one immersion into the target manifold of the fold map.
This immersion together with more detailed informations about the neighbourhood of the set of
singular points in the source manifold can be used as a geometrical invariant
 (see Section~\ref{foldgerms})
of fold cobordism classes (see Definition~\ref{cobdef}) of fold maps 
(for results about cobordisms of singular maps with completely
different approach from our present paper, see, for example, 
\cite{
EkSzuTer, Ik, IS, 
Ko,
RSz, 
Saspecgen, We} and the works of Ando, Sadykov, Sz\H{u}cs and the author in References). In 
this way we obtain a geometrical relation between fold maps and immersions 
with prescribed normal bundles via cobordisms.
In \cite{Kal5} we showed that these invariants describe completely
the cobordisms of simple fold maps of $(n+1)$-dimensional manifolds into $n$-dimensional manifolds
and in \cite{Kal4} we showed that these invariants detect direct summands of
the cobordism group of fold maps, namely stable homotopy groups of spheres. In this paper
we extend the results of \cite{Kal4} and show 
by constructing fibrations of Morse functions over
immersed manifolds similarly to \cite{Kal4} that these invariants
also detect stable homotopy groups of the classifying spaces $BO(k)$ as direct summands of the
cobordism semigroups of fold maps.

The paper is organized as follows.
In Section~\ref{prelim} we give basic notations and definitions,
in Section~\ref{foldgerms} we define cobordism invariants of fold maps, summerize 
our already existing results concerning these invariants and
study the cobordism classes of manifolds which have fold maps into stably parallelizable manifolds.
In Section~\ref{subgroups} we extend the results of \cite{Kal4}.

The author would like to thank the referee for his helpful comments, which 
improved the paper.

\section{Preliminaries}\label{prelim}

\subsection{Notations}
In this paper the symbol ``$\amalg$'' denotes the disjoint union, 
for any number $x$ the symbol ``$\lfloor x \rfloor$'' denotes the greatest
integer $i$ such that $i \leq x$,
$\ga^1$ denotes the universal line bundle over $\RP^{\infty}$,
$\ep^1_X$ denotes the trivial line bundle over the space $X$,
$\ep^1$ denotes the trivial line bundle over the point,
and the symbols $\csi^k$, $\eta^k$, etc. usually denote $k$-dimensional real vector bundles.
The symbols det$\csi^k$ and $T\csi^k$ denote the determinant line bundle 
and the Thom space of the bundle $\csi^k$, respectively.
The symbol $\imm^{\csi^k}_{N}(n-k,k)$ denotes
the cobordism group of $k$-codimensional immersions into 
an $n$-dimensional manifold $N$
whose normal bundles are induced from $\csi^k$ (this
group is isomorphic to the group $\{\dot N, T\csi^k \}$, where
$\dot N$ denotes the one point compactification of the manifold $N$
and the symbol $\{X,Y \}$ denotes the group of stable homotopy classes of continuous
maps from the space $X$ to the space $Y$). 
The symbol $\imm_N(n-k,k)$ denotes
the cobordism group $\imm^{\ga^k}_N(n-k,k)$ where
$\ga^k$ is the universal bundle for $k$-dimensional real vector bundles and 
$N$ is an $n$-dimensional manifold.
The symbol $\pi_n^s(X)$ ($\pi_n^s$) denotes the $n$th stable homotopy group of the space $X$ (resp. spheres).
The symbol ``id$_A$'' denotes the identity map of the space $A$.
The symbol $\ep$ denotes a small positive number.
All manifolds and maps are smooth of class $C^{\infty}$.
 
\subsection{Fold maps}
 
Let $n \geq 1$ and $q > 0$.
Let $Q^{n+q}$ and $N^n$ be smooth manifolds of dimensions $n+q$ and $n$ 
respectively. Let $p \in Q^{n+q}$ be a singular point of 
a smooth map $f \co Q^{n+q} \to N^{n}$. The smooth map $f$  has a {\it fold 
singularity of index $\la$} at the singular point $p$ if we can write $f$ in some local coordinates around $p$  
and $f(p)$ in the form 
\[  
f(x_1,\ldots,x_{n+q})=(x_1,\ldots,x_{n-1}, -x_n^2 - \cdots -x_{n+\la-1}^2 + x_{n+\la}^2 + \cdots + x_{n+q}^2)
\] 
for some $\la$ $(0 \leq \la \leq q+1)$ (the index $\la$ is well-defined if
we consider that $\la$ and $q+1-\la$ represent the same index). 

A smooth map $f \co Q^{n+q} \to N^{n}$ is called a {\it fold map} if $f$ has only 
fold singularities.

A smooth map $f \co Q^{n+q} \to N^n$ 
  has a {\it definite fold
singularity} at a fold singularity $p \in Q^{n+q}$ if $\la = 0$ or $\la = q+1$,
otherwise $f$ has an {\it indefinite fold singularity of index $\la$}
at the fold singularity $p \in Q^{n+q}$.

Let $S_{\la}(f)$ denote the set of fold singularities of index $\la$ of $f$ in $Q^{n+q}$.
Note that $S_{\la}(f) = S_{q+1-\la}(f)$.
 Let $S_f$ denote the set $\bigcup_{\la} S_{\la}(f)$.

Note that the set $S_f$ is an ${(n-1)}$-dimensional submanifold of the manifold
$Q^{n+q}$.

Note that each connected component of the manifold $S_f$ has its own index $\la$ if
we consider that $\la$ and $q+1-\la$ represent the same index. 

Note that for a fold map $f \co Q^{n+q} \to N^{n}$ and for an index $\la$ ($0 \leq \la \leq \lfloor q/2 \rfloor$)
the codimension one immersion $f \mid_{S_{\la}(f)} \co S_{\la}(f) \to N^n$ 
of the singular set of index $\la$ $S_{\la}(f)$ has a canonical framing 
(i.e., trivialization of the normal bundle) by identifying canonically the set of 
fold singularities of index $\la$ 
of the map $f$ with   
the fold germ 
$(x_1,\ldots,x_{n+q}) \mapsto (x_1,\ldots,x_{n-1}, -x_n^2 - \cdots -x_{n+\la-1}^2 + x_{n+\la}^2 + \cdots + x_{n+q}^2)$,
see, for example, \cite{Sa1}.

If $f \co Q^{n+q} \to N^n$ is a fold map in general position, then 
the map $f$
restricted to the singular set $S_f$ is a general positional
 codimension one immersion  into the target manifold $N^n$.
 
Since every fold map is in general position after a small perturbation, 
and we study maps under the equivalence relation {\it cobordism}
(see Definition~\ref{cobdef}),
in this paper we can restrict ourselves to studying fold maps which are 
in general position.
Without mentioning we suppose that a fold map $f$ is in general position.

\subsection{Equivalence relations of fold maps}\label{kob}

\begin{definition}\label{cobdef} (Cobordism) 
Two fold maps $f_i \co Q_i^{n+q} \to N^n$ $(i=0,1)$  
of closed (oriented) $({n+q})$-dimensional manifolds $Q_i^{n+q}$ $(i=0,1)$ 
into an $n$-dimensional manifold $N^n$ are  
{\it \textup{(}oriented\textup{)} cobordant} if 
\begin{enumerate}[(a)]
\item
there exists a fold map 
$F \co X^{n+q+1} \to N^n \times [0,1]$ of a compact (oriented) $(n+q+1)$-dimensional 
manifold $X^{n+q+1}$, 
\item
$\del X^{n+q+1} = Q_0^{n+q} \amalg (-)Q_1^{n+q}$ and 
\item
${F \mid}_{Q_0^{n+q} \x [0,\ep)}=f_0 \x
{\mathrm {id}}_{[0,\ep)}$ and ${F \mid}_{Q_1^{n+q} \x (1-\ep,1]}=f_1 \x 
{\mathrm {id}}_{(1-\ep,1]}$, where 
$Q_0^{n+q} \x [0,\ep)$
 and $Q_1^{n+q} \x (1-\ep,1]$ are small collar neighbourhoods of $\del X^{n+q+1}$ with the
identifications $Q_0^{n+q} = Q_0^{n+q} \x \{0\}$ and $Q_1^{n+q} = Q_1^{n+q} \x \{1\}$. 
\end{enumerate}

We call the map $F$ a {\it cobordism} between $f_0$ and $f_1$.
\end{definition} 
This clearly defines an equivalence relation on the set of fold maps 
of closed (oriented) $({n+q})$-dimensional manifolds into an  
$n$-dimensional manifold $N^n$.

We denote 
 the set of fold (oriented) cobordism classes of fold maps of closed (oriented) $({n+q})$-dimensional manifolds 
into an $n$-dimensional manifold 
$N^n$ (into the Euclidean space $\R^n$)
by $\CC ob_{N,f}^{(O)}(n+q,-q)$ (by $\CC ob_{f}^{(O)}(n+q,-q)$).
We note that we can define a commutative semigroup operation in the usual way on the 
set of cobordism classes $\CC ob_{N,f}^{(O)}(n+q,-q)$
by the disjoint union.
If the target manifold $N^n$ is the Euclidean space $\R^n$ (or more
generally $N^n$ has the form $\R^1 \x M^{n-1}$, for some $(n-1)$-dimensional manifold  
$M^{n-1}$), then the elements in the semigroup $\CC ob_{N,f}^{(O)}(n+q,-q)$
have an inverse: namely compose them with a reflection
in a hyperplane (in $\{0\} \x M^{n-1}$ in general, see \cite{Szucs4}).
Hence the semigroups $\CC ob_{N,f}^{(O)}(n+q,-q)$ are in this case 
actually groups.

We can refine this equivalence relation by considering the 
singular fibers (see, for example, \cite{Lev, Sa, SaYa, Ya}) of a fold map.

\begin{definition}
Let $\tau$ be a set of singular fibers.
Two fold maps $f_i \co Q_i^{n+q} \to N^n$ $(i=0,1)$ with singular fibers in the set $\tau$  
of closed (oriented) $({n+q})$-dimensional manifolds $Q_i^{n+q}$ $(i=0,1)$ 
into an $n$-dimensional manifold $N^n$ are  
{\it \textup{(}oriented\textup{)} $\tau$-cobordant} if 
they are (oriented) cobordant in the sense of Definition~\ref{cobdef}
by a fold map $F \co X^{n+q+1} \to N^n \times [0,1]$ 
whose singular fibers are in the set $\tau$.
\end{definition}

In this way we can obtain the notion of {\it simple fold cobordism} of {\it simple fold maps}, i.e., let
$\tau$ be the set of all the singular fibers which have at most one singular point in each 
of their
connected components. We denote the set of simple fold cobordism classes of simple fold maps 
of closed (oriented) $({n+q})$-dimensional manifolds $Q^{n+q}$  
into an $n$-dimensional manifold $N^n$
by $\CC ob_{N,s}^{(O)}(n+q,-q)$. For results about simple fold maps, see, for example, 
\cite{Kal5, Sa1, Sa2, Sa3, Sa4, Saku, Yo}.
For results about $\tau$-cobordisms of fold maps with other set $\tau$ of singular fibers,
see \cite{Kal7, SaYa2}.

\section{Cobordism invariants of fold maps}\label{foldgerms}

\subsection{Fold germs and bundles of germs}

Let us define the fold germ \[g_{\la, q} \co (\R^{q+1},0) \to (\R,0)\] by 
\[  
g_{\la,q}(x_1,\ldots,x_{q+1})=(-x_1^2 - \cdots -x_{\la}^2 + x_{1+\la}^2 + \cdots + x_{1+q}^2)
\] 
for some $q \geq 1$ and $0 \leq \la \leq \lfloor (q+1)/2 \rfloor$.

We say that a pair of diffeomorphism germs
\[(\al \co (\R^{q+1},0) \to (\R^{q+1},0), \be \co (\R,0) \to (\R,0))\]
is an {\it automorphism} of a fold germ $g_{\la,q} \co (\R^{q+1},0) \to (\R,0)$
if the equation $g_{\la,q} \circ \al = \be \circ g_{\la,q}$ holds.
We will work with bundles whose fibers and structure groups are germs and  
groups of automorphisms of germs, respectively, see \cite{Jan}.

If we have a fold map $f \co Q^{n+q} \to N^n$, then for each 
$\la$ $(0 \leq \la \leq \lfloor (q+1)/2 \rfloor)$
we have a fold germ bundle $\csi_{\la}(f) \co E(\csi_{\la}(f)) \to S_{\la}(f)$ 
over the singular set of index $\la$ $S_{\la}(f)$, i.e., the
fiber of $\csi_{\la}(f)$ is the fold germ $g_{\la,q}$, and
over the singular set $S_{\la}(f)$ we have an $(\R^{q+1},0)$ bundle denoted by $\csi_{\la}^{q+1}(f) 
\co E(\csi_{\la}^{q+1}(f)) \to S_{\la}(f)$
and an  $(\R,0)$ bundle denoted by $\eta_{\la}^1(f)
\co E(\eta_{\la}^1(f)) \to S_{\la}(f)$ together with a fiberwise map 
$E(\csi_{\la}(f)) \co 
E(\csi_{\la}^{q+1}(f)) 
 \to  E(\eta_{\la}^1(f))$ which is equivalent on each fiber to 
 the fold germ $g_{\la,q}$.
 The base space of the fold germ bundle $\csi_{\la}(f)$ is the singular set of index $\la$ $S_{\la}(f)$
 and the total space of this bundle $\csi_{\la}(f)$ is the fiberwise map 
 $E(\csi_{\la}(f)) \co 
E(\csi_{\la}^{q+1}(f)) 
 \to  E(\eta_{\la}^1(f))$ between the total spaces of the bundles 
$\csi_{\la}^{q+1}(f)$ and $\eta_{\la}^1(f)$. 
We call the bundle $\eta_{\la}^1(f)$ the {\it target of the fold germ bundle 
$\csi_{\la}(f)$}.

 By \cite{Jan, Szucs3, Wa} this bundle $\csi_{\la}(f)$ is a locally trivial bundle 
 in a sense with a fiber $g_{\la,q}$
 and an appropriate group of automorphisms
\[(\al \co (\R^{q+1},0) \to (\R^{q+1},0), \be \co (\R,0) \to (\R,0))\] as structure group.
 By \cite{Jan, Wa}
this structure group can be reduced to a maximal compact subgroup, namely to the 
group $O(\la) \x O(q+1-\la)$ in the case of $0 \leq \la < (q+1)/2$ and the group 
generated by the group $O(\la) \x O(\la)$ and the transformation 
$ T = 
\begin{pmatrix}
0 & I_{\la} \\ I_{\la} & 0 
\end{pmatrix}
$ in the case of $\la = (q+1)/2$, see, for example, \cite{Sa1}.
We denote this latter group by $\langle O(\la) \x O(\la), T \rangle$.

It follows that the targets of the universal fold germ bundles of index 
$\la$ $(0 \leq \la \leq \lfloor (q+1)/2 \rfloor)$ 
are the trivial line bundles
$\eta_{\la,q}^1 \co \ep^1 \to B(O(\la) \x O(q+1-\la))$ for $\la \neq (q+1)/2$ and the appropriate line bundle
$\eta_{(q+1)/2,q}^1 \co l^1 \to B\langle O(\la) \x O(\la), T \rangle$ 
for $q$ odd.

For oriented manifolds $Q^{n+q}$ and $N^n$ we have the analogous statements if
we consider the subgroup $S(O(\la) \x O(q+1-\la))$ of orientation preserving transformations of the group
$O(\la) \x O(q+1-\la)$ 
and the trivial line bundle $\ep^1 \to BS(O(\la) \x O(q+1-\la))$
in the case of $0 \leq \la < (q+1)/2$,
and the appropriate subgroup $R_{\langle O(\la) \x O(\la), T \rangle}$
of the group $\langle O(\la) \x O(\la), T \rangle$
and the corresponding line bundle
$\tilde l^1 \to BR_{\langle O(\la) \x O(\la), T \rangle}$ 
in the case of $\la = (q+1)/2$.

\subsection{Immersions with prescribed normal bundles}

As an imitation of the method of lifting positive codimensional singular maps \cite{Szucs4} 
we can construct 
homomorphisms
\[
\csi_{{\la}, q}^N  \co \CC ob_{N,f}^{(O)}(n+q,-q) \to \imm^{\ep^1_{B(O(\la) \x O(q+1-\la))}}_N(n-1,1)
\]
for $0 \leq \la < (q+1)/2$ and
\[
\csi_{(q+1)/2, q}^N  \co \CC ob_{N,f}^{(O)}(n+q,-q) \to \imm^{l^1}_N(n-1,1)
\]
for $q$ odd
by mapping a cobordism class of a fold map $f$ into the cobordism class of the immersion of its 
fold singular set of index $\la$ $S_{\la}(f)$ with normal bundle induced from
the target of the universal fold germ bundle of index 
$\la$. (In the case of oriented manifolds $Q^{n+q}$ and $N^n$, we have the analogous homomorphisms 
\[
\csi_{{\la}, q}^{O,N}  \co \CC ob_{N,f}^{O}(n+q,-q) \to \imm^{\ep^1_{BS(O(\la) \x O(q+1-\la))}}_N(n-1,1)
\]
for $0 \leq \la < (q+1)/2$ and
\[
\csi_{(q+1)/2, q}^{O,N}  \co \CC ob_{N,f}^{O}(n+q,-q) \to \imm^{\tilde l^1}_N(n-1,1)
\]
for $q$ odd as well.)
We used these homomorphisms in \cite{Kal4, Kal5, Kal7} in order to describe cobordisms of fold maps. 

Since 
the cobordism group of $k$-codimensional immersions into a manifold
$N^{n}$ with normal bundle induced from a vector bundle $\csi^k$ is 
isomorphic to the group of stable homotopy classes $ \{ \dot N, T\csi^k \}$
\cite{We}, 
the homomorphisms $\csi_{{\la}, q}^N$ for $\la \neq (q+1)/2$ and $\csi_{(q+1)/2, q}^N$ for $q$ odd
can be considered as homomorphisms into the groups 
$\{ \dot N, T\ep^1_{B(O(\la) \x O(q+1-\la))} \}$ and $\{ \dot N, Tl^1 \}$,
respectively.
Without mentioning we identify the cobordism group of $k$-codimensional immersions into a manifold
$N^{n}$ with normal bundle induced from a vector bundle $\csi^k$ with 
the group of stable homotopy classes $ \{ \dot N, T\csi^k \}$.

We remark that the group $\{ \dot N, T\ep^1_{B(O(\la) \x O(q+1-\la))} \}$
is equal to the group 
$\{ \dot N, S^1 \vee SB(O(\la) \x O(q+1-\la)) \} \cong \{ \dot N, S^1 \} \oplus \{ \dot N, SB(O(\la) \x O(q+1-\la)) \}$,
where ``$S$'' denotes suspension.
Therefore the homomorphisms $\csi_{{\la}, q}^N$ ($\la \neq (q+1)/2$) can be written
in the forms 
\[
\csi_{{\la}, q, 1}^N \oplus \csi_{{\la}, q, 2}^N \co \CC ob_{N,f}^{(O)}(n+q,-q) \to 
\{ \dot N, S^1 \} \oplus \{ \dot N, SB(O(\la) \x O(q+1-\la)) \}
\] 
obviously.
Note that the homomorphism $\csi_{{\la}, q, 1}^N$ maps the fold cobordism class of
a fold map $f$ into the cobordism class of the framed immersion 
of the singular set of index $\la$ of the fold map $f$ $(0 \leq \la < (q+1)/2)$.

Note that $B(O(\la) \x O(q+1-\la)) = BO(\la) \x BO(q+1-\la)$ and there exists a composition of bundle maps
$\ep^1_{BO(q+1-\la)} \to \ep^1_{B(O(\la) \x O(q+1-\la))} \to \ep^1_{BO(q+1-\la)}$
which is the identity map. Therefore the group
$\{ \dot N, SBO(q+1-\la) \}$ is a direct summand of the group $\{ \dot N, SB(O(\la) \x O(q+1-\la)) \}$.

Let $\varrho_{{\la}, q}^N \co \imm^{\ep^1_{B(O(\la) \x O(q+1-\la))}}_N(n-1,1)
\to \imm^{\ep^1_{BO(q+1-\la)}}_N(n-1,1)$ denote the natural forgetting homomorphism.
Then we have weaker cobordism invariants 
\[
\varrho_{{\la}, q}^N \circ \csi_{{\la}, q}^N  \co \CC ob_{N,f}^{(O)}(n+q,-q) \to 
\{ \dot N, S^1 \} \oplus \{ \dot N, SBO(q+1-\la) \}
\]
($0 \leq \la < (q+1)/2$).

Let ${\tilde \theta}_q^N \co \imm^{l^1}_N(n-1,1) \to \imm_N(n-1,1)$ be
the natural forgetting homomorphism, where $\eta_{(q+1)/2,q}^1 \co l^1 \to B\langle O(\la) \x O(\la), T \rangle$
is the target of the universal fold germ bundle of index $(q+1)/2$ for $q$ odd.

A result about these invariants, which we obtain similarly to \cite{Kal4}, is the following.

\begin{theorem}\label{ori}
For $n \geq 1$\textup{,} an $n$-dimensional manifold $N^n$ and $q > 0$ 
the cobordism semigroup $\CC ob_{N, f}^{(O)}(n+q,-q)$ of 
fold maps of \textup{(}oriented\textup{)} $(n+q)$-dimensional manifolds into $N^n$
contains the direct sum of $\lfloor (q+1)/2 \rfloor$ copies of the group
$\{ \dot N, S^1 \}$\textup{.}
The restriction of the homomorphism $\csi_{{\la}, q}^N$ to the $(\la+1)$-th copy
of $\{ \dot N, S^1 \}$ is an isomorphism
$(\la = 0, \ldots,\lfloor (q-1)/2 \rfloor)$\textup{.}
\end{theorem}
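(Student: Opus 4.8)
The plan is to exhibit, for every $\la$ with $0\le\la\le\lfloor(q-1)/2\rfloor$, a splitting homomorphism
\[
\Phi_\la\co\{\dot N,S^1\}\to\CC ob_{N,f}^{(O)}(n+q,-q)
\]
and to detect its image by the invariant $\csi_{\la,q}^N$, so that the images of the different $\Phi_\la$'s are independent in $\CC ob_{N,f}^{(O)}(n+q,-q)$. Recall $\{\dot N,S^1\}$ is the cobordism group of framed codimension one immersions into $N$; a class is represented by such an immersion $g\co P^{n-1}\to N^n$, and its framing gives an immersed tubular neighbourhood $\bar g\co P\x(-2,2)\to N$. Over this neighbourhood I put a trivial fibration of a fixed Morse function $h_\la\co S^{q+1}\to(-2,2)$ and set
\[
\Phi_\la([g]):=\bigl[\,\bar g\circ(\mathrm{id}_P\x h_\la)\co P\x S^{q+1}\to N\,\bigr],
\]
a fold map (put in general position by a small perturbation) of the closed $(n+q)$-manifold $P\x S^{q+1}$ whose singular set is $P\x(\text{critical set of }h_\la)$. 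This is the analogue for an arbitrary target $N$ of the construction of \cite{Kal4}.

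The Morse functions must be chosen asymmetrically. For $\la\ge1$ let $h_\la$ have exactly one critical point of each Morse index $0,\la,\la+1,q+1$, with the $\la$- and $(\la+1)$-handles forming a cancelling pair, so that the underlying handlebody is $D^{q+1}$ with a top handle, i.e.\ $S^{q+1}$; for $\la=0$ let $h_0$ have two critical points of index $0$, one of index $1$ and one of index $q+1$ (join two $0$-handles by a $1$-handle and cap by a $(q+1)$-handle). That $\Phi_\la$ is well defined and additive is routine: a framed immersed cobordism between $g_0$ and $g_1$, crossed with $h_\la$ over an immersed tubular neighbourhood of it, is a fold cobordism between $\Phi_\la([g_0])$ and $\Phi_\la([g_1])$; disjoint unions go to disjoint unions; and two Morse functions with the same critical data are joined by a generic path, so the choice of $h_\la$ is immaterial. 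The oriented case is identical, $P\x S^{q+1}$ being oriented by orienting the two factors.

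Finally one computes $\csi_{\mu,q}^N\circ\Phi_\la$. Since the fibration of $h_\la$ over the tubular neighbourhood is trivial, the fold germ bundle over each component $P\x\{\text{crit.\ pt.}\}$ of the singular set is induced from a point, so the $B(O(\mu)\x O(q+1-\mu))$-component $\csi_{\mu,q,2}^N$ vanishes on $\mathrm{im}\,\Phi_\la$. For the framed component one unwinds the canonical framing: a Morse index $j$ critical point of $h_\la$ gives a component of the index $\min(j,q+1-j)$ singular set, mapped by a pushoff of $g$ whose canonical framing agrees with the tubular coordinate when $j\le\lfloor q/2\rfloor$ and is reversed when $j$ is larger (passing from the index $q+1-\mu$ germ to the standard index $\mu$ germ negates the target). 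As a pushoff of $g$ in the framing direction is framed cobordant to $g$ and reversing the framing negates the class, the minimum and maximum of $h_\la$ cancel in $\{\dot N,S^1\}$; reading off the rest, $\csi_{\la,q,1}^N\Phi_\la=\mathrm{id}$, each $\Phi_\la$ with $\la\ge1$ contributes $0$ to index $0$ and $[g]$ to indices $\la$ and $\la+1$ only, and $\Phi_0$ contributes $2[g]-[g]=[g]$ to index $0$ and $[g]$ to index $1$. Thus the integer matrix $\bigl(\csi_{\mu,q,1}^N\circ\Phi_\la\bigr)_{\mu,\la=0,\dots,\lfloor(q-1)/2\rfloor}$ is lower triangular with $1$'s on the diagonal, hence unimodular, so $\bigoplus_\la\Phi_\la$ is a monomorphism onto a subsemigroup of $\CC ob_{N,f}^{(O)}(n+q,-q)$ isomorphic to $\bigoplus_\la\{\dot N,S^1\}$, and $\csi_{\la,q}^N$ restricted to the $(\la+1)$-st summand $\mathrm{im}\,\Phi_\la$ is an isomorphism. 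I expect the main obstacle to be exactly this combinatorial core: choosing the $h_\la$ so that the canonical framings on their definite and indefinite critical loci assemble into a unimodular matrix — in particular producing a nonzero class in index $0$, which forces the two-minimum choice of $h_0$ and uses that a definite fold locus by itself always bounds and so cannot detect a nontrivial framed immersion. A minor point to check is that $\bar g$ being only an immersion causes no trouble, since the construction and the fold condition are local.
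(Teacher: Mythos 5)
Your proposal is correct and follows essentially the same route as the paper (and \cite{Kal4}): fibering Morse functions of $S^{q+1}$ with critical points of indices $0,j-1,j,q+1$ over framed immersions (your $h_\la$ is the paper's $h_{\la+1}$, and only product fibrations are needed since just the framed summand $\{\dot N, S^1\}$ is being detected) and then inverting the resulting unipotent triangular matrix of singular-set invariants. The only slip is the inessential side remark that two Morse functions with the same critical data are joined by a generic path --- such a path in general passes through birth--death points, so its trace is not a fold cobordism --- but your argument never actually needs independence of the choice of $h_\la$.
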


\begin{theorem}\label{unori}
For $n \geq 1$\textup{,} an $n$-dimensional manifold $N^n$\textup{,} $q > 0$\textup{,}
$k \geq 1$ and
 $q = 2k -1$ the cobordism semigroup $\CC ob_{N, f}(n+q,-q)$ of 
fold maps of unoriented $(n+q)$-dimensional manifolds into $N^n$
contains the direct sum $\imm^{}_N(n-1,1) \oplus \bigoplus_{\la = 0}^{\lfloor (q-1)/2 \rfloor} \{ \dot N, S^1 \}$
as a direct summand\textup{.}
The direct summand $\imm^{}_N(n-1,1)$ is detected by the homomorphism
${\tilde \theta}_q^N \circ \csi_{(q+1)/2, q}^N \co \CC ob_{N,f}(n+q,-q) \to \imm^{}_N(n-1,1)$\textup{,} 
where ${\tilde \theta}_{q}^N \circ \csi_{(q+1)/2, q}^N$ maps 
a fold cobordism class $[f]$ to the cobordism class of the immersion of the
 singular set of index $k$ of the fold map $f$\textup{.} 
\end{theorem}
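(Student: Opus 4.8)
The plan is to mimic the strategy used for Theorem~\ref{ori} (which in turn follows \cite{Kal4}), but now keeping track of the extra index-$(q+1)/2$ stratum that is only available when $q = 2k-1$ is odd. The key geometric construction is the following: given any $k$-codimensional immersion $g \co M^{n-1} \looparrowright N^n$ whose normal bundle is induced from the universal bundle $\ga^1$ over $BO(1)=\RP^\infty$ (equivalently, given a class in $\imm_N(n-1,1)$), I want to build a simple fold map $f_g \co Q^{n+q} \to N^n$ all of whose singular set of index $k = (q+1)/2$ immerses into $N^n$ exactly as $g$, with the fold germ bundle $\csi_{(q+1)/2}(f_g)$ the pullback of the universal one, and whose singular sets of lower index are empty. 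Over the immersed $(n-1)$-manifold $M$ one fibers a family of Morse functions on $(q+1)$-dimensional manifolds whose only critical points are pairs of index-$k$ folds, glued fiberwise according to the structure group $\langle O(k)\x O(k), T\rangle$; this is the ``fibrations of Morse functions over immersed manifolds'' referred to in the introduction. Performing this construction for a representative of each class gives a set-theoretic section
\[
s \co \imm_N(n-1,1) \to \CC ob_{N,f}(n+q,-q),
\]
and one checks directly from the definitions that $s$ is a semigroup homomorphism (disjoint union goes to disjoint union) and that ${\tilde \theta}_q^N \circ \csi_{(q+1)/2,q}^N \circ s = \mathrm{id}$, because the construction manifestly recovers the immersion of $S_k(f_g)$ after forgetting the line-bundle decoration via ${\tilde \theta}_q^N$. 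This exhibits $\imm_N(n-1,1)$ as a retract, hence a direct summand.

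Next I would assemble the remaining $\lfloor (q-1)/2 \rfloor$ copies of $\{\dot N, S^1\}$. For each index $\la$ with $0 \le \la \le \lfloor (q-1)/2 \rfloor$, Theorem~\ref{ori} already provides a copy of $\{\dot N, S^1\}$ inside $\CC ob_{N,f}(n+q,-q)$ on which $\csi_{\la,q}^N$ (more precisely its first component $\csi_{\la,q,1}^N$) restricts to an isomorphism; these arise from fibered families of Morse functions with a single pair of index-$\la$ folds and no other singularities. The point is that the $\la = k = (q+1)/2$ retract built in the previous paragraph and the $\la < (q+1)/2$ copies are ``independent'': a fold map supported on the index-$\la$ stratum has empty index-$\mu$ stratum for $\mu \ne \la$, so $\csi_{\mu,q}^N$ kills it. Hence the homomorphism
\[
\bigl({\tilde \theta}_q^N \circ \csi_{(q+1)/2,q}^N\bigr) \oplus \bigoplus_{\la=0}^{\lfloor (q-1)/2 \rfloor} \csi_{\la,q,1}^N \co \CC ob_{N,f}(n+q,-q) \to \imm_N(n-1,1) \oplus \bigoplus_{\la=0}^{\lfloor (q-1)/2 \rfloor} \{\dot N, S^1\}
\]
admits a section given by disjoint union of the individual sections, which proves the claimed splitting.

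The main obstacle, as in \cite{Kal4}, is verifying that the sections are well defined on cobordism classes and genuinely land in the fold (not merely smooth-map) cobordism semigroup: one must check that cobordant immersions produce fold-cobordant fold maps, which requires running the fibered-Morse-function construction over a one-higher-dimensional immersed cobordism and ensuring the resulting map $F \co X^{n+q+1} \to N^n \times [0,1]$ is again a fold map in general position with the prescribed collar behaviour. A secondary subtlety, special to the odd-$q$ case, is that the index-$k$ fold germ bundle has the nontrivial structure group $\langle O(k)\x O(k), T\rangle$ with its associated line bundle $l^1$, so one must be careful that the Morse-function model chosen to build $f_g$ carries precisely this structure group — this is why the statement forgets down to $\imm_N(n-1,1)$ via ${\tilde \theta}_q^N$ rather than attempting to retain the full $\imm^{l^1}_N(n-1,1)$. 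Once these two points are handled exactly as in the oriented case, the retraction argument closes the proof.
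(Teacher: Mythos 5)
Your overall strategy -- building sections of the invariants $\csi_{\la,q}^N$ by fibering Morse functions over immersed $(n-1)$-manifolds and then checking that the composite with the detection homomorphism is the identity -- is exactly the approach the paper takes (it defers the details to \cite{Kal4}, but the proof in Section~\ref{subgroups} shows the template: the maps $\al_{j,q}^N$, Lemma~\ref{kiterj}, and the verification that $\csi_{j,q}^N\circ\al_{j,q}^N$ restricts to the identity). However, two of your steps would fail as stated. First, your independence argument rests on the claim that the fold map built for the index-$\la$ summand has ``empty index-$\mu$ stratum for $\mu\neq\la$.'' This is impossible to arrange: the fiber of the construction is a Morse function on a \emph{closed} $(q+1)$-manifold (in the paper, $h_j\co S^{q+1}\to(-\ep,\ep)$ with critical points of indices $0$, $j-1$, $j$, $q+1$), and any such function has a maximum and a minimum, i.e.\ definite folds, so the index-$0$ stratum is always nonempty, and for $j\geq 2$ the index-$(j-1)$ stratum is nonempty too. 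The correct repair is not independence but triangularity: the model $h_j$ only produces fold strata of indices $\leq j$, so the matrix of composites $\csi_{\mu,q,1}^N\circ\al_{j,q}^N$ is triangular with identities on the diagonal, hence an automorphism of the direct sum; this is what actually yields the splitting.

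Second, for the summand $\imm_N(n-1,1)$ -- the part of the statement that is special to $q$ odd -- the section must be defined on immersions with \emph{arbitrary} normal line bundle, so the fiberwise Morse function must admit an automorphism covering the reflection $t\mapsto -t$ of the normal interval and restricting near the index-$k$ critical points to an automorphism in the $T$-component of $\langle O(k)\x O(k),T\rangle$. A function of the type $h_j$ (critical indices $0,j-1,j,q+1$) admits no such symmetry, since reversing the function sends an index-$i$ critical point to one of index $q+1-i$ and would require an index-$(k+1)$ critical point that is not there. One needs a different fiber model whose critical points are arranged symmetrically under the reversal (e.g.\ two middle-index critical points interchanged by the symmetry, with the maximum and minimum swapped); constructing this model and its $\Z_2$-symmetry is the essential content behind the $\imm_N(n-1,1)$ summand, and your proposal leaves it as a black box. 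You do correctly identify why the statement only claims the image under ${\tilde\theta}_q^N$ rather than all of $\imm^{l^1}_N(n-1,1)$, which is the right reading of the theorem.
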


\begin{remark}
For $q$ even, in Theorems~\ref{ori} and \ref{unori} we could also chose the indeces
$\la = 1, \ldots,\lfloor (q+1)/2 \rfloor$ 
for the homomorphisms
$\csi_{{\la}, q, 1}^N$ instead of the indeces $\la = 0, \ldots,\lfloor (q-1)/2 \rfloor$.
The proof is similar to that of \cite{Kal4}, details are left to the reader.
\end{remark}

Another application of our invariants is the following result about simple fold maps,
which we obtained in \cite{Kal5}.

Let 
\[
\ga_n^N \co \imm^{\ep^1}_N(n-1,1) \oplus \imm^{\ep^1 \x \ga^1}_N(n-2,2) \to \imm_N(n-1,1) \oplus \imm^{\ga^1 \x \ga^1}_N(n-2,2)
\]
denote the natural forgetting homomorphism and
\[\phi_n^N \co \CC ob_{N,s}^O(n+1,-1) \to \CC ob_{N,f}^O(n+1,-1)\]
denote the natural homomorphism
which maps a simple fold cobordism class into its fold cobordism class.

When the codimension is equal to $-1$ and the target manifold $N^n$ is
oriented, in \cite{Kal5} we defined a semigroup homomorphism
\[
{\mathcal I_N} \co \CC ob_{N,s}^O(n+1,-1) \to \imm^{\ep^1}_N(n-1,1) \oplus \imm^{\ep^1 \x \ga^1}_N(n-2,2),
\]
which is just an adaptation of our invariant $\csi_{1, 1}^N$ to 
the case of simple fold maps of oriented manifolds into oriented manifolds and their oriented simple fold cobordisms.

In \cite{Kal5} we showed that 
there exists a homomorphism
\[
\theta_n^N \co \imm^{{\mathrm {det}}(\ga^1 \x \ga^1)}_N(n-1,1)
\to \imm_N(n-1,1) \oplus \imm^{\ga^1 \x \ga^1}_N(n-2,2)
\]
such that the diagram 
\begin{equation}\label{invaridiag}
\begin{CD}
\CC ob_{N,s}^O(n+1,-1) @> {\mathcal I_N} >> \imm^{\ep^1}_N(n-1,1) \oplus \imm^{\ep^1 \x \ga^1}_N(n-2,2) \\
@VV \phi_n^N V @V \ga_n^N VV \\
\CC ob_{N,f}^O(n+1,-1) @> \theta_n^N \circ \csi_{1, 1}^N >> \imm^{}_N(n-1,1) \oplus \imm^{\ga^1 \x \ga^1}_N(n-2,2).
\end{CD}
\end{equation}
commutes and we obtained the following.

\begin{theorem}
Let $N^n$ be an oriented manifold\textup{.} Then\textup{,}
the semigroup homomorphism $\mathcal I_N$ is a semigroup isomorphism between the 
cobordism semigroup $\CC ob_{N,s}^O(n+1,-1)$ of simple fold maps and the group 
$\imm^{\ep^1}_N(n-1,1) \oplus \imm^{\ep^1 \x \ga^1}_N(n-2,2)$\textup{.}
\end{theorem}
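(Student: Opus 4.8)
The plan is to establish that $\mathcal{I}_N$ is both injective and surjective as a semigroup homomorphism; since the target $\imm^{\ep^1}_N(n-1,1) \oplus \imm^{\ep^1 \x \ga^1}_N(n-2,2)$ is a group, this will also show the source semigroup $\CC ob_{N,s}^O(n+1,-1)$ is a group and that $\mathcal{I}_N$ is a semigroup isomorphism onto it. First I would recall the geometric meaning of $\mathcal{I}_N$: it sends the simple fold cobordism class of $f \co Q^{n+1} \to N^n$ to the pair consisting of (i) the framed immersion of the definite fold singular set $S_0(f)$ with its canonical framing (an element of $\imm^{\ep^1}_N(n-1,1)$), and (ii) the immersion of the indefinite fold singular set $S_1(f)$ of index $1$ together with the normal data recorded by the target of the fold germ bundle, which for simple fold maps in codimension $-1$ carries an $\ep^1 \x \ga^1$ normal structure (an element of $\imm^{\ep^1 \x \ga^1}_N(n-2,2)$). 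That this is a well-defined semigroup homomorphism is essentially the content of the earlier construction of $\csi_{\la,q}^N$ specialized to simple fold maps, so I may quote it.

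For surjectivity, given a framed immersion $g_0 \co P^{n-1} \looparrowright N^n$ and an immersion $g_1 \co P'^{n-2} \looparrowright N^n$ with the prescribed $\ep^1 \x \ga^1$ normal bundle, I would build a simple fold map realizing this pair. The idea, following the fibration-of-Morse-functions technique of \cite{Kal4} and the explicit construction in \cite{Kal5}, is to thicken each immersed singular manifold to a tubular neighbourhood in $N^n$ and attach over it the standard family of Morse functions with a single definite (resp. single indefinite index-$1$) critical point prescribed by the fold germ $g_{0,1}$ (resp. $g_{1,1}$); the canonical framing on $S_0$ and the $\ga^1$-data on $S_1$ are exactly what is needed to glue these local models into a genuine fold map, and simpleness is automatic because each fiber is a Morse function with at most one critical point. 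Away from the singular neighbourhoods one extends by a trivial fibration (e.g. by a collection of sphere bundles capping off the fibers), which can be arranged since the complement is covered by the relevant immersion data; cobordism classes of such immersions carry precisely the obstruction to doing this coherently, so the construction goes through on the nose.

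For injectivity, suppose $f \co Q^{n+1} \to N^n$ is a simple fold map whose pair $\mathcal{I}_N([f])$ is null-cobordant. Then there are framed null-cobordisms of the immersed singular sets $S_0(f)$ and $S_1(f)$ inside $N^n \x [0,1]$. Using these as the singular-set data of a cobordism $F \co X^{n+2} \to N^n \x [0,1]$, I would run the same construction in one dimension higher: near the null-cobording immersed manifolds attach the standard Morse-function families, and over the complementary region — where the original $f$ now has no singularities, so $Q^{n+1} \setminus (\text{nbhd of } S_f)$ maps by a proper submersion, i.e. a fiber bundle with closed $1$-manifold fibers — extend using the fact that such submersions over a manifold-with-boundary can be filled once the boundary behaviour is standardized. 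The upshot is $[f] = 0$. The \emph{hard part} is precisely this last extension step: after fixing the singular strata one must control the regular part of the cobordism, which amounts to a handle-by-handle (or Morse-theoretic) argument showing that the circles-bundle-over-the-complement structure bounds compatibly with the prescribed boundary — this is where the codimension being exactly $-1$ (so regular fibers are $1$-manifolds) and the simpleness hypothesis are used essentially, and it is the place where one must invoke, as in \cite{Kal5}, a careful analysis of the singular fibers allowed in $\tau$ together with the commutativity of diagram~\eqref{invaridiag} to keep track of the fold-germ bundle over the indefinite stratum.
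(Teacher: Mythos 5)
The survey states this theorem without proof (it is quoted from \cite{Kal5}), so there is no in-paper argument to measure yours against; judging the sketch on its own terms, it contains a concrete error at the very start and leaves the two essential steps unproved. The error: you take the second component of $\mathcal I_N$ to be ``the immersion of the indefinite fold singular set $S_1(f)$'' viewed as an element of $\imm^{\ep^1 \x \ga^1}_N(n-2,2)$. This does not typecheck: for $f\co Q^{n+1}\to N^n$ the set $S_1(f)$ is $(n-1)$-dimensional and $f\mid_{S_1(f)}$ is a codimension-one immersion, while $\imm^{\ep^1 \x \ga^1}_N(n-2,2)$ is by definition a group of cobordism classes of codimension-\emph{two} immersions of $(n-2)$-manifolds. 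The codimension-one immersion of $S_1(f)$ with its germ-bundle data is what $\csi^N_{1,1}$ records (landing in a group of the form $\imm^{l^1}_N(n-1,1)$); the second component of $\mathcal I_N$ must instead be an $(n-2)$-dimensional locus inside $S_1(f)$ (the submanifold dual to the twisting of the line-bundle data of the figure-eight singular fibre, whose normal bundle in $N^n$ is then of type $\ep^1\oplus\ga^1$), and the passage between these two descriptions is exactly what the map $\theta^N_n$ and diagram~\eqref{invaridiag} encode. Your first component is also doubtful: the paper describes $\mathcal I_N$ as an adaptation of $\csi^N_{1,1}$, i.e.\ of the index-$1$ invariant, so both components are extracted from the indefinite singular set rather than one from $S_0(f)$ and one from $S_1(f)$. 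Since both halves of your argument consist of realizing or null-cobording the data you have (mis)identified, they do not address the invariant the theorem is actually about.

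Beyond that, the two steps you yourself single out as ``the hard part'' are the entire content of the theorem, and the sketch does not supply them. For surjectivity, after installing the Morse-function fibrations over tubular neighbourhoods of the immersed singular data, you must extend over the complement in $N^n$ by a map with closed $1$-manifold fibres matching the prescribed boundary behaviour; asserting that ``the construction goes through on the nose'' is not an argument, and the existence of such an extension is precisely where the structure of the quotient space (Stein factorization) of a simple fold map has to be used. The same filling problem reappears, relatively, in your injectivity step. So the proposal correctly guesses the overall two-sided strategy --- build fold maps from immersion data and conversely --- but as written it establishes neither injectivity nor surjectivity.
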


Let
\[
\ga_{n,1}^N \co \imm^{\ep^1}_N(n-1,1)  \to \imm^{}_N(n-1,1) 
\]
and
\[
\ga_{n,2}^N \co \imm^{\ep^1 \x \ga^1}_N(n-2,2) \to \imm^{\ga^1 \x \ga^1}_N(n-2,2).
\]
denote the natural forgetting homomorphisms.

Let $\pi_{n,2}^N \co \CC ob_{N,s}^O(n+1,-1) \to \imm^{\ep^1 \x \ga^1}_N(n-2,2)$
denote the composition of $\mathcal I_N$ with the projection to the second factor.

\begin{theorem}\label{injdontes}
If two simple fold cobordism classes $[f]$ and $[g]$ in $\CC ob_{N,s}^O(n+1,-1)$
are mapped into distinct elements by the natural homomorphism
$\ga_{n,2}^N \circ \pi_{n,2}^N$\textup{,}
then $[f]$ and $[g]$ are not fold cobordant\textup{.}
If $\ga_{n,2}^N$ is injective\textup{,} then so is $\phi_n^N$\textup{.}

If there exists a fold map from a not null-cobordant 
$(n+1)$-dimensional manifold into $N^n$\textup{,} then
$\phi_n^N$ is not surjective\textup{.}
\end{theorem}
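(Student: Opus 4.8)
\emph{Overview and first assertion.} I would deduce all three assertions from the commutative square~(\ref{invaridiag}) together with the fact that $\mathcal I_N$ is an isomorphism, using in addition, for the third assertion, that the oriented bordism class of the source manifold is a fold cobordism invariant. Write $\ga_n^N=\ga_{n,1}^N\oplus\ga_{n,2}^N$ and $\pi_{n,2}^N=\mathrm{pr}_2\circ\mathcal I_N$, with $\mathrm{pr}_2$ the projection onto the second summand. For the first assertion I would prove the contrapositive: if $[f]$ and $[g]$ are fold cobordant, i.e.\ $\phi_n^N([f])=\phi_n^N([g])$, then applying $\theta_n^N\circ\csi_{1,1}^N$ and invoking commutativity of~(\ref{invaridiag}) gives $\ga_n^N(\mathcal I_N([f]))=\ga_n^N(\mathcal I_N([g]))$; projecting onto the second summand, which $\ga_n^N=\ga_{n,1}^N\oplus\ga_{n,2}^N$ respects, yields $\ga_{n,2}^N(\pi_{n,2}^N([f]))=\ga_{n,2}^N(\pi_{n,2}^N([g]))$, as claimed.

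\emph{Injectivity.} Suppose $\ga_{n,2}^N$ is injective and $\phi_n^N([f])=\phi_n^N([g])$. The computation above gives $\ga_{n,2}^N(\pi_{n,2}^N([f]))=\ga_{n,2}^N(\pi_{n,2}^N([g]))$, hence $\pi_{n,2}^N([f])=\pi_{n,2}^N([g])$ by injectivity, so $\mathcal I_N([f])$ and $\mathcal I_N([g])$ have the same second component. For the first component I would use that, by the construction of $\mathcal I_N$ in~\cite{Kal5}, $\mathrm{pr}_1\circ\mathcal I_N$ records the canonically framed immersion of the index-$0$ (definite) fold singular set and therefore factors as $\csi_{0,1,1}^N\circ\phi_n^N$ through the fold cobordism invariant $\csi_{0,1,1}^N\co\CC ob_{N,f}^O(n+1,-1)\to\imm^{\ep^1}_N(n-1,1)$ (cf.\ Theorem~\ref{ori}). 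Then $\phi_n^N([f])=\phi_n^N([g])$ forces the first components to agree too, so $\mathcal I_N([f])=\mathcal I_N([g])$, and $[f]=[g]$ since $\mathcal I_N$ is an isomorphism; thus $\phi_n^N$ is injective.

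\emph{Non-surjectivity.} Sending the fold cobordism class of a fold map $f\co Q^{n+1}\to N^n$ to the oriented bordism class $[Q]\in\Omega_{n+1}^{\mathrm{SO}}$ is well defined, since the source of a fold cobordism $F\co X^{n+2}\to N^n\x[0,1]$ is an oriented cobordism between the sources of its two ends, and it is additive under disjoint union; so it defines a homomorphism $\be\co\CC ob_{N,f}^O(n+1,-1)\to\Omega_{n+1}^{\mathrm{SO}}$. I would then prove $\be\circ\phi_n^N=0$, i.e.\ that the source of every simple fold map into $N^n$ is null-bordant; granting this, the hypothesis gives a fold map $f\co Q\to N^n$ with $[Q]\neq 0$, whence $\be([f])\neq 0$ while $\mathrm{im}(\phi_n^N)\subseteq\ker\be$, so $[f]\notin\mathrm{im}(\phi_n^N)$ and $\phi_n^N$ is not surjective. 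To prove $\be\circ\phi_n^N=0$ it suffices to verify it on one representative of each class of $\CC ob_{N,s}^O(n+1,-1)$, and for this I would use the explicit simple fold maps of~\cite{Kal5} realizing the generators of $\imm^{\ep^1}_N(n-1,1)\oplus\imm^{\ep^1\x\ga^1}_N(n-2,2)$, whose source manifolds are, by inspection of that construction, assembled from disc and sphere bundles over the immersed submanifolds carrying the invariant and hence bound; additivity of $\be$ then carries this conclusion to the whole semigroup.

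\emph{The main obstacle.} The first two assertions reduce to diagram chasing once the identity $\mathrm{pr}_1\circ\mathcal I_N=\csi_{0,1,1}^N\circ\phi_n^N$ is recorded, which should be immediate from how $\mathcal I_N$ is built but needs to be stated. The real difficulty is $\be\circ\phi_n^N=0$ in the third assertion, namely that a simple fold map of an $(n+1)$-manifold into $N^n$ has null-bordant source; I expect this to be the main obstacle, requiring either a careful analysis of the realizing maps of~\cite{Kal5} or, more conceptually, a (round-)handle decomposition of $Q$ read off from the fact that the fibres of a simple fold map are disjoint circles undergoing only independent births, deaths and saddles.
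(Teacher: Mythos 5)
The paper itself gives no proof of Theorem~\ref{injdontes} (it is a survey quoting \cite{Kal5}), so I can only measure your argument against the structure the paper sets up. Your first assertion is fine: the contrapositive via commutativity of~(\ref{invaridiag}), using $\ga_n^N=\ga_{n,1}^N\oplus\ga_{n,2}^N$ and $\pi_{n,2}^N=\mathrm{pr}_2\circ\mathcal I_N$, is exactly the intended diagram chase. The logical skeleton of the third assertion (the bordism class of the source is a fold cobordism invariant, $\mathrm{im}\,\phi_n^N\subseteq\ker\be$, hence non-surjectivity) is also correct; but note that the fact you single out as ``the main obstacle'' --- that the source of a simple fold map of an oriented $(n+1)$-manifold is null-cobordant --- is a quotable known result rather than something you need to re-derive from the realizing maps: the Remark following Proposition~\ref{sourcezero} explicitly refers to this ``analogous result about simple fold maps \cite{Kal5, Sa1}''. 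Your sketch via sphere- and disc-bundle sources of the generators is plausible but, as written, is not a proof.

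The genuine gap is in the second assertion. You reduce it to the identity $\mathrm{pr}_1\circ\mathcal I_N=\csi_{0,1,1}^N\circ\phi_n^N$, i.e.\ to the claim that the first component of $\mathcal I_N$ is the framed immersion of the \emph{definite} (index $0$) fold locus and hence a fold cobordism invariant. This is asserted, not proved, and it conflicts with the paper's own description of $\mathcal I_N$ as ``an adaptation of the invariant $\csi_{1,1}^N$'', the index-$1$ invariant: commutativity of~(\ref{invaridiag}) forces $\ga_{n,1}^N\circ\mathrm{pr}_1\circ\mathcal I_N$ to coincide with the first component of $\theta_n^N\circ\csi_{1,1}^N\circ\phi_n^N$, which is the \emph{unframed} immersion of the \emph{indefinite} fold set; so $\mathrm{pr}_1\circ\mathcal I_N$ must record the indefinite fold set with the extra framing available only for simple fold maps of oriented manifolds. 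That framing is a simple-fold-cobordism invariant but is not obviously a fold-cobordism invariant (the fold-cobordism invariant $\csi_{1,1}^N$ only retains the $l^1$-structure, and $\imm^{\ep^1}_N(n-1,1)\to\imm^{l^1}_N(n-1,1)$ need not be injective). Consequently, injectivity of $\ga_{n,2}^N$ controls only the second components of $\mathcal I_N([f])$ and $\mathcal I_N([g])$, and your argument does not close on the first components. You need a substitute for this step --- some argument, presumably from the construction in \cite{Kal5}, showing that for simple fold maps the first component is determined by the fold cobordism class (possibly together with the second component) --- before the injectivity claim follows.
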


\subsection{Pontryagin-Thom type construction}

In \cite{Kal6} among others we show the following, which is
a negative codimensional analogue of the Pontryagin-Thom type construction for singular maps in positive codimension
\cite{RSz, Szucs1, Sz1, Sz2, Szucs2, Sz3}.

\begin{theorem}\label{PTcons}
There is a Pontryagin-Thom type construction for $-1$ codimensional fold maps\textup{,} i\textup{.}e\textup{.,} 
\begin{enumerate}[\textup{(}\rm 1\textup{)}]
\item
there exists a universal fold map 
$\csi_{-1}^{}  \co U_{-1} \to \Ga_{-1}$ such that\footnote{The spaces $U_{-1}$ and $\Ga_{-1}$
are not \textup{(}finite dimensional\textup{)} manifolds and so $\csi_{-1}^{}$ is not a fold map\textup{.}}
for every $-1$ codimensional fold map $g \co Q^{n+1} \to N^{n}$ there exists a commutative diagram 
\[
\begin{CD}
Q^{n+1} @>>> U_{-1} \\
@V g VV @V \csi_{-1}^{} VV \\
N^{n} @>>> \Ga_{-1}
\end{CD}
\]
moreover the arising map $N^{n} \to \Ga_{-1}$ is unique up to homotopy\textup{.} It will be denoted by $\chi_g$\textup{.}
The space $\Ga_{-1}$ is constructed by gluing together total spaces of vector bundles
corresponding to the possible fold singular fibers and their automorphisms\textup{.}
\item
For every positive integer $n$ and $n$-dimensional manifold $N^n$ 
there is a natural bijection 
\[
\chi_*^N \co \CC ob_{N^n,f}^{}(n+1,-1) \to [{\dot N}^n, \Ga_{-1}]
\]
between the set of fold cobordism classes $\CC ob_{N^n,f}^{}(n+1,-1)$ 
and the set of homotopy classes $[{\dot N}^n, \Ga_{-1}]$\textup{.} 
The map $\chi_*^N$ maps a fold cobordism class
$[g]$ into the homotopy class of the inducing map $\chi_g \co {\dot N}^{n} \to \Ga_{-1}$\textup{.}  
\end{enumerate}
\end{theorem}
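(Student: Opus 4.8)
The plan is to adapt to negative codimension the Pontryagin--Thom and classifying-space constructions developed for positive-codimensional singular maps in \cite{RSz, Szucs1, Szucs2}; the new point is a systematic description of the fold singular fibers of a $-1$ codimensional map and of their automorphism groups. Since we always work with fold maps in general position, a fold map $g\co Q^{n+1}\to N^{n}$ induces a stratification of its target: for each diffeomorphism type $\tau$ of a (possibly singular, possibly empty) fiber let $N_{\tau}\subset N^{n}$ be the set of points whose $g$-fiber has type $\tau$. The points with regular fiber (a closed $1$-manifold) form the open dense stratum, the points outside $g(Q^{n+1})$ form the \emph{empty} stratum, and each $N_{\tau}$ is a submanifold of $N^{n}$ of a codimension $c(\tau)$ that is read off from $\tau$ (a stratum over which $j$ fold sheets of prescribed indices meet in general position has codimension $j$, etc.). The first step is to list these fiber types together with their automorphism groups $\AUT(\tau)$. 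By the reduction of structure groups to maximal compact subgroups recalled in Section~\ref{foldgerms} (following \cite{Jan, Wa}), each $\AUT(\tau)$ is a compact Lie group assembled from the groups $O(\la)\x O(q+1-\la)$ and the twisted groups $\langle O(\la)\x O(\la),T\rangle$ of the individual fold germs (here $q=1$) together with the finite groups permuting equivalent components of the fiber.

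Next I would build $\Ga_{-1}$ and $U_{-1}$ by gluing local models. To a fiber type $\tau$ one associates the universal fold-map germ along a fiber of type $\tau$: a bundle over $B\AUT(\tau)$ with that germ as fiber, consisting of vertical data (an $(\R^{q+1},0)$-bundle mapping to an $(\R,0)$-bundle, as in Section~\ref{foldgerms}) over horizontal normal data $V_{\tau}\to B\AUT(\tau)$, the latter playing the role of the normal bundle of the stratum $N_{\tau}$ in $N^{n}$. Define $\Ga_{-1}$ by gluing the Thom-space-type mapping cylinders of the sphere bundles of the $V_{\tau}$, the piece of $\tau'$ being attached to the piece of $\tau$ according to the incidence relation $N_{\tau}\subset\overline{N_{\tau'}}$ that holds whenever the fiber type $\tau$ degenerates into the more singular type $\tau'$; gluing the total spaces of the vertical bundles compatibly produces $U_{-1}$, and $\csi_{-1}\co U_{-1}\to\Ga_{-1}$ is the assembled fold-germ map. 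For a fold map $g$, the classifying map $\chi_{g}\co\dot N^{n}\to\Ga_{-1}$ is then defined stratum by stratum as the classifying map of the normal bundle of $N_{\tau}$ into the $\tau$-piece of $\Ga_{-1}$, with the empty stratum and the point at infinity going to the base point; compatibility of the normal data along incident strata makes $\chi_{g}$ continuous and the square of (1) commutative. Uniqueness of $\chi_{g}$ up to homotopy follows since for a general-positional $g$ the stratification and all the normal bundle data are well defined up to isotopy, and two general-positional representatives of a cobordism class are joined by a general-positional cobordism.

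For part (2) put $\chi_{*}^{N}([g])=[\chi_{g}]$. It is well defined: a fold cobordism $F\co X^{n+2}\to N^{n}\x[0,1]$ is itself a $-1$ codimensional fold map, so part (1) applied to $F$ — together with the collar condition, which makes $F$ product-like near $N^{n}\x\{0,1\}$ — yields a homotopy $\dot N^{n}\x[0,1]\to\Ga_{-1}$ from $\chi_{f_{0}}$ to $\chi_{f_{1}}$. It is surjective: given $\varphi\co\dot N^{n}\to\Ga_{-1}$, make $\varphi$ transverse to every stratum of $\Ga_{-1}$ and pull back $\csi_{-1}$; near each stratum $\csi_{-1}$ is by construction the product of the corresponding fold-germ local model with the identity of the stratum, so a transverse preimage carries exactly the prescribed fold germs, that is, it is a fold map $g$ with $\chi_{g}\simeq\varphi$. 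It is injective: if $\chi_{f_{0}}\simeq\chi_{f_{1}}$, take a homotopy $\dot N^{n}\x[0,1]\to\Ga_{-1}$, make it transverse to the strata rel endpoints, and pull back $\csi_{-1}$ to obtain a fold cobordism between $f_{0}$ and $f_{1}$. Well-definedness together with surjectivity and injectivity gives the bijection.

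The main obstacle, where most of the work lies, is that $\Ga_{-1}$ and $U_{-1}$ are not finite-dimensional manifolds — there are infinitely many fiber types and each $B\AUT(\tau)$ is infinite-dimensional — so both ``transversality to a stratum of $\Ga_{-1}$'' and the claim that a transverse pull-back of $\csi_{-1}$ is again a fold map must be given a precise meaning. I would handle this by filtering $\Ga_{-1}$ by the complexity of the singular fibers (for instance by the total number of singular points), so that each term is a union of finitely many strata with controlled closures; a compact domain $\dot N^{n}$, or a compact homotopy, then meets only finitely many strata, and the usual jet-transversality and stratified general-position arguments can be run on finite-dimensional skeleta, after which the product normal form of $\csi_{-1}$ along each stratum does the rest. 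One must also verify that the gluing maps between the local models are well defined, which is exactly the local triviality and the reduction of the structure groups to $O(\la)\x O(q+1-\la)$ and $\langle O(\la)\x O(\la),T\rangle$ recorded in Section~\ref{foldgerms}.
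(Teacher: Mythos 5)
Your proposal is correct in outline and follows essentially the same approach as the paper: the survey itself gives no proof but defers to \cite{Kal6}, and the construction there is exactly the one you describe --- a Rim\'anyi--Sz\H{u}cs-type classifying space glued from bundles over the classifying spaces of the (compact) automorphism groups of the possible singular fibers, with the cobordism--homotopy bijection obtained by the usual transversality/pullback arguments run on a filtration by fiber complexity. You also correctly identify the genuinely delicate points (local product structure of $\csi_{-1}$ along each stratum, and making transversality precise in the infinite-dimensional stratified target), which is where the work in \cite{Kal6} actually lies.
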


By Theorem~\ref{PTcons} we have a bijective cobordism invariant 
$\chi_*^N \co \CC ob_{N^n,f}^{}(n+1,-1) \to [{\dot N}^n, \Ga_{-1}]$ which
is a group isomorphism $\chi_*^{\R^n} \co \CC ob_{f}^{}(n+1,-1) \to \pi_n(\Ga_{-1})$ in
the case of $N^n = \R^n$.

By defining the singular sets of index $0$ and $1$ of
the universal fold map \[\csi_{-1}^{}  \co U_{-1} \to \Ga_{-1}\]
in the obvious way and by inducing the immersions of these singular sets into the space $\Ga_{-1}$
we get two representatives of two stable homotopy classes
$\varrho_0 \in \{  \Ga_{-1}, T{\ep^1_{BO(2)}} \}$
and $\varrho_1 \in \{  \Ga_{-1}, T{l^1} \}$.

If we have a fold map $g \co Q^{n+1} \to N^n$, then we have 
the stable homotopy class $\chi_g^s$ of the inducing map 
$\chi_g \co {\dot N}^{n} \to \Ga_{-1}$ in the group $\{ {\dot N}^{n}, \Ga_{-1} \}$.
Hence we have the elements $\varrho_0 \circ \chi_g^s$ and $\varrho_1 \circ \chi_g^s$ in the groups
$\{ {\dot N}^{n}, T{\ep^1_{BO(2)}} \}$
and $\{  {\dot N}^{n}, T{l^1} \}$, respectively, which correspond to
the elements $\csi_{{0}, 1}^N([g])$ and $\csi_{{1}, 1}^N([g])$, respectively. 

Therefore we have the following.

\begin{prop}
$\csi_{{i}, 1}^N([g]) = \varrho_i \circ \chi_g^s$ for $i=0,1$\textup{.}
\end{prop}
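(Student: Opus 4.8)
The plan is to unwind both sides of the claimed equality and observe that they are built from the same geometric data, namely the immersion of the index-$i$ singular set of a fold map into the target. On the left, by definition, $\csi_{i,1}^N([g])$ is the cobordism class (equivalently, via the Wells isomorphism, the stable homotopy class in $\{\dot N^n, T\ep^1_{BO(2)}\}$ for $i=0$ and in $\{\dot N^n, Tl^1\}$ for $i=1$) of the immersion of $S_i(g)$ into $N^n$ with normal bundle induced from the target of the universal fold germ bundle of index $i$. On the right, $\chi_g^s$ is the stable homotopy class of the classifying map $\chi_g\co \dot N^n \to \Ga_{-1}$ supplied by Theorem~\ref{PTcons}, and $\varrho_i \in \{\Ga_{-1}, T\ep^1_{BO(2)}\}$ (resp.\ $\{\Ga_{-1}, Tl^1\}$) is the stable class obtained by taking the index-$i$ singular set of the \emph{universal} fold map $\csi_{-1}\co U_{-1}\to \Ga_{-1}$ and forming the induced immersion into $\Ga_{-1}$; the composite $\varrho_i\circ\chi_g^s$ is then their composition in stable homotopy.

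First I would make precise the sense in which $\varrho_i$ ``is'' the immersion of the index-$i$ singular set of $\csi_{-1}$: the space $\Ga_{-1}$ is assembled from total spaces of vector bundles indexed by the possible fold singular fibers and their automorphisms, and among these are the pieces corresponding to a single definite fold point (index $0$) and a single indefinite fold point (index $1$). Restricting to these strata and applying the Pontryagin--Thom correspondence for immersions with prescribed normal bundle (the Wells/Szűcs construction used throughout the excerpt) yields exactly the stable maps $\varrho_0$ and $\varrho_1$. The key step is then naturality of this correspondence with respect to the commutative square in Theorem~\ref{PTcons}(1): the pullback of the universal fold map along $\chi_g$ recovers $g$ up to fold cobordism, hence the pullback of the index-$i$ singular stratum of $\csi_{-1}$ along $\chi_g$ is (cobordant to) the index-$i$ singular set $S_i(g)$ of $g$, and the pullback of the framing/normal-bundle data is the corresponding data on $S_i(g)$. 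Translating this pullback statement through the Pontryagin--Thom isomorphisms turns geometric pullback into precomposition of stable maps, which is precisely the identity $\csi_{i,1}^N([g]) = \varrho_i\circ\chi_g^s$.

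Concretely the steps are: (1) identify $\varrho_i$ explicitly with the image under Pontryagin--Thom of the index-$i$ singular stratum of $\csi_{-1}\co U_{-1}\to\Ga_{-1}$, using the description of $\Ga_{-1}$ as a gluing of vector bundle total spaces; (2) recall from Theorem~\ref{PTcons}(1) that for a fold map $g$ the classifying map $\chi_g$ pulls the universal fold map back to $g$ (up to the cobordism under which everything is considered), and verify that this pullback is compatible with passing to the index-$i$ singular subset and with the canonical normal-bundle identifications recorded in Section~\ref{foldgerms}; (3) invoke naturality of the Pontryagin--Thom construction for immersions with prescribed normal bundles to conclude that pulling back the stratum along $\chi_g$ corresponds on the homotopy side to composing with $\chi_g^s$; (4) match the resulting target groups: $\{\dot N^n, T\ep^1_{BO(2)}\}$ for $i=0$ via $\eta^1_{0,1}$ and $\{\dot N^n, Tl^1\}$ for $i=1$ via $\eta^1_{1,1}$, which is exactly the codomain of $\csi_{i,1}^N$ as set up earlier; (5) check $i=0$ and $i=1$ uniformly, the only difference being which bundle ($\ep^1_{BO(2)}$ versus $l^1$) plays the role of the prescribed normal bundle.

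The main obstacle I expect is step (2)–(3): making the ``pullback along $\chi_g$ commutes with extracting the index-$i$ singular stratum'' statement rigorous, since $U_{-1}$ and $\Ga_{-1}$ are not finite-dimensional manifolds and $\csi_{-1}$ is not literally a fold map, so one must argue at the level of the stratified/germ-bundle structure rather than with honest transversality. Once one grants (as the excerpt does, citing \cite{Kal6}) that the universal construction behaves functorially on singular strata and their automorphism bundles, the identification is essentially bookkeeping: both $\csi_{i,1}^N([g])$ and $\varrho_i\circ\chi_g^s$ are the Pontryagin--Thom class of the framed (resp.\ $l^1$-normally-structured) immersion $g|_{S_i(g)}\co S_i(g)\to N^n$, and hence they agree. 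Details of the germ-bundle naturality are exactly parallel to \cite{Kal4, Kal6} and I would refer the reader there for the parts that are routine given that machinery.
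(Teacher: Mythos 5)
Your proposal is correct and follows essentially the same route as the paper, which derives the identity directly from the observation that $\varrho_i$ is the Pontryagin--Thom class of the immersed index-$i$ singular stratum of the universal fold map and that precomposition with $\chi_g^s$ pulls this back to the immersion of $S_i(g)$. The paper in fact gives only this one-line justification, so your more careful unwinding of the naturality of the correspondence (including the caveat about $U_{-1}$ and $\Ga_{-1}$ not being manifolds) is a faithful, slightly more detailed version of the same argument.
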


\subsection{Cobordism class of the source manifold of a fold map}

We have a natural homomorphism 
$\si_{N, q}^{O} \co \CC ob_{N, f}^{O}(n+q,-q) \to \Omega_{n+q}$
which assigns to a class of a fold map $f \co Q^{n+q} \to N^n$ 
the cobordism class $[Q^{n+q}]$ of the source manifold $Q^{n+q}$.

It is an easy fact that $\si_{\R, q}^{O}$ is surjective
and the image of $\si_{\R^2, q}^{O}$ consists of 
the cobordism classes of $(2+q)$-dimensional manifolds with even Euler 
characteristic \cite{Lev1}. 

\begin{prop}\label{sourcezero}
Let $N^n$ be a stably parallelizable $n$-dimensional manifold\textup{,} where $n$ is even\textup{.}
Let $f \co Q^{n+1} \to N^n$ be a fold map of an orientable manifold $Q^{n+1}$ such that
its singular set $S_f$ is orientable\textup{.} 
Then\textup{,} the oriented cobordism class of the source manifold $Q^{n+1}$ is zero\textup{.}
\end{prop}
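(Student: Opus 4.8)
The plan is to show that the normal bundle of the source manifold $Q^{n+1}$ is stably trivial, so that $[Q^{n+1}] = 0$ in $\Omega_{n+1}$ (since a stably parallelizable closed manifold bounds — indeed $\Omega_{n+1}$ is detected by Stiefel--Whitney and Pontryagin numbers, all of which vanish for a stably parallelizable manifold). Since $N^n$ is stably parallelizable, $Q^{n+1}$ will be stably parallelizable as soon as we know that the virtual normal bundle $\nu_f$ of the fold map $f$ — i.e. the stable bundle $f^*TN^n - TQ^{n+1}$ over $Q^{n+1}$ — is stably trivial. Away from the singular set $S_f$, $f$ is a submersion of a $1$-dimensional kernel, so $\nu_f$ is represented by (minus) the kernel line bundle $\ker df$, which is a line bundle; and near $S_f$ the local form of a fold shows that $\nu_f$ is controlled by the fold germ data, in particular by the line bundles $\eta^1_\la(f)$ over $S_\la(f)$ and the vertical bundles $\csi^{q+1}_\la(f)$ (here $q=1$, so these are $2$-plane bundles). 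The orientability hypotheses on $Q^{n+1}$ and on $S_f$ are there precisely to force these line and plane bundles to be orientable, hence (in the right dimensions) trivial or at least $w_1 = 0$.

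The key steps, in order, would be: (1) Identify $\nu_f$ globally. On $Q^{n+1} \setminus S_f$ the differential $df$ has a $1$-dimensional kernel $K = \ker df$, and $TQ^{n+1} \cong f^*TN \oplus K$ there, so $\nu_f \cong -K$ stably. Since $Q^{n+1}$ is orientable and $f^*TN$ is orientable (as $N$ is stably parallelizable, hence orientable), $K$ is an orientable, i.e. trivial, line bundle over $Q^{n+1}\setminus S_f$. (2) Extend across $S_f$. Using the normal form of the fold and the structure-group reduction recalled in Section~\ref{foldgerms} (the structure group of $\csi_\la(f)$ reduces to $O(\la)\times O(q+1-\la)$, resp. $\langle O(\la)\times O(\la), T\rangle$), the virtual normal bundle $\nu_f$ over a tubular neighbourhood of $S_f$ is expressed through $\eta^1_\la(f)$ and $\csi^{q+1}_\la(f)$; orientability of $S_f$ together with orientability of $Q^{n+1}$ pins down the first Stiefel--Whitney classes of these bundles to be zero, so $w_1(\nu_f) = 0$ on all of $Q^{n+1}$. (3) Conclude that $\nu_f$, being a virtual bundle of virtual dimension $-1$ with vanishing $w_1$ over the orientable manifold $Q^{n+1}$, is stably trivial: in the relevant range $-K$ with $K$ an orientable line bundle is stably trivial (an orientable line bundle is trivial). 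Hence $TQ^{n+1} \oplus \ep^1 \cong f^*TN \oplus \ep^0$ stably, and since $TN$ is stably trivial so is $TQ^{n+1}$. (4) Invoke that a closed stably parallelizable manifold is null-cobordant (all characteristic numbers vanish), so $[Q^{n+1}] = 0$ in $\Omega_{n+1}$, which is the assertion (the oriented cobordism class, a fortiori, is zero — here one also uses that $Q^{n+1}$ is orientable so that it defines a class in $\Omega_{n+1}^{SO}$, which is likewise detected by Pontryagin and Stiefel--Whitney numbers).

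I expect the main obstacle to be step (2): making precise how the virtual normal bundle behaves across the singular locus and checking that the orientability assumptions (on $Q^{n+1}$ and on $S_f$, with $n$ even) really do kill $w_1(\nu_f)$ globally rather than just on the complement of $S_f$. One has to be careful that the line bundle $K = \ker df$, defined only off $S_f$, is compared correctly with the fold-germ line bundle $\eta^1_\la(f)$ along $S_f$ — essentially $K$ degenerates to the "second-order" normal direction of the fold — and that no monodromy around $S_f$ obstructs triviality; this is where the reduction of the structure group to a compact group and the orientability hypotheses must be combined. The parity condition "$n$ even" presumably enters to ensure that the relevant obstruction (or an Euler-characteristic term, compare the remark after $\si^O_{\R^2,q}$ and \cite{Lev1}) vanishes; I would track where it is used and, if it only serves to make $\Omega_{n+1}$ have no relevant torsion or to control the Euler characteristic of $S_f$, state that explicitly. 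Once $w_1(\nu_f)=0$ and $\nu_f$ has virtual dimension $-1$, stable triviality and hence null-cobordism of $Q^{n+1}$ follow routinely.
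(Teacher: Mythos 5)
There is a genuine gap, and it sits exactly where you predicted trouble: in passing from the kernel line bundle off $S_f$ to a global statement. Your step (1) is fine on $Q^{n+1}\setminus S_f$, but the kernel $K=\ker df$ does \emph{not} extend across $S_f$ as a line bundle; the correct global statement (Lemma 3.1 of \cite{Sa1}, which is what the paper invokes) is that $TQ^{n+1}\oplus\ep^1\cong f^*TN^n\oplus\eta^2$ for some $2$-plane bundle $\eta^2$ over $Q^{n+1}$, which restricts to $K\oplus\ep^1$ away from $S_f$ but may be globally nontrivial. Consequently the virtual normal bundle is $\ep^1-\eta^2$, not $-K$, and your step (3) is false as stated: a virtual bundle of rank $-1$ with $w_1=0$ over an orientable manifold need not be stably trivial, since $w_2(\eta^2)$ and $p_1(\eta^2)$ survive. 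The conclusion you draw --- that $Q^{n+1}$ is stably parallelizable --- is therefore unjustified and is in general too strong; nothing in the hypotheses kills $w_2(Q^{n+1})$ or $p_1(Q^{n+1})$. A telltale sign is that your argument never uses the parity of $n$, whereas the hypothesis ``$n$ even'' is essential.

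What actually closes the proof is weaker and purely numerical. From $TQ^{n+1}\oplus\ep^1\cong f^*TN^n\oplus\eta^2$ and the stable triviality of $TN^n$ one gets $w_j(Q^{n+1})=0$ for $j\geq 3$ and $p_j(Q^{n+1})=0$ for $j\geq 2$; orientability gives $w_1(Q^{n+1})=0$, so the only possibly nonzero classes are $w_2$ and $p_1$. Since $\dim Q^{n+1}=n+1$ is odd, every monomial in $w_2$ has even degree and hence cannot evaluate on the fundamental class, and an odd-dimensional manifold has no Pontryagin numbers at all; thus all Stiefel--Whitney and Pontryagin numbers vanish and $[Q^{n+1}]=0$ in oriented cobordism. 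This is precisely where ``$n$ even'' enters --- not through torsion in $\Omega_{n+1}$ or the Euler characteristic of $S_f$ as you speculate. Your outline could be repaired by replacing steps (2)--(4) with this characteristic-number count, keeping only the (correct) observation that the fold structure bounds the degrees in which characteristic classes can live.
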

\begin{proof}
By \cite[Lemma 3.1.]{Sa1} the bundle $TQ^{n+1} \oplus \ep^1$  is isomorphic to
the bundle $f^*TN^n \oplus \eta^{2}$ for some $2$-dimensional bundle $\eta^2$.
Hence the Stiefel-Whitney classes $w_j(Q^{n+1})$ are zero for $j \geq 3$ and
the Pontryagin classes $p_j(Q^{n+1})$ are zero for $j \geq 2$.
Since the Stiefel-Whitney class $w_1(Q^{n+1})$ is zero and 
$n+1$ is odd, every Stiefel-Whitney and Pontryagin numbers of the manifold $Q^{n+1}$ are zero.
This completes the proof.
\end{proof}

\begin{remark}
Proposition~\ref{sourcezero} generalizes the analogous result about
simple fold maps \cite{Kal5, Sa1}. 
\end{remark}

\begin{prop}\label{rankbecsles}
Let $q$ be even and let $N^n$ be a stably parallelizable manifold\textup{.}
Then\textup{,} the rank of the image of $\si_{N, q}^{O}$ is less than or equal to
the number of partitions of $(n+q)/4$ where each number in a partition is
less than or equal to $(q+1)/2$\textup{.}
In particular\textup{,} if $n > q+2$\textup{,} then
the homomorphism 
\[
\si_{N, q}^{O}\otimes \Q \co \CC ob_{N, f}^{O}(n+q,-q) \otimes \Q \to \Omega_{n+q} \otimes \Q
\]
is not surjective\textup{.} 
\end{prop}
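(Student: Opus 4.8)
The plan is to use the structural restriction on the tangent bundle of the source manifold $Q^{n+q}$ coming from the existence of the fold map, exactly as in the proof of Proposition~\ref{sourcezero}, and then translate it into a constraint on which Pontryagin numbers of $Q^{n+q}$ can be nonzero. First I would invoke \cite[Lemma 3.1.]{Sa1} (in the general codimension form): for a fold map $f\co Q^{n+q}\to N^n$ one has a bundle isomorphism $TQ^{n+q}\oplus\ep^1 \cong f^*TN^n \oplus \eta^{q+1}$ for some $(q+1)$-dimensional real bundle $\eta^{q+1}$ over $Q^{n+q}$. Since $N^n$ is stably parallelizable, $f^*TN^n$ is stably trivial, so stably $TQ^{n+q}\cong\eta^{q+1}$. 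Hence the total Pontryagin class of $Q^{n+q}$ equals $p(\eta^{q+1})$, and therefore $p_j(Q^{n+q})=0$ for all $j > \lfloor (q+1)/2\rfloor$, because a $(q+1)$-dimensional bundle has vanishing Pontryagin classes above degree $\lfloor (q+1)/2\rfloor$.

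Next, since $q$ is even, $n+q$ is of the same parity as $n$; but the interesting case is $n+q \equiv 0 \pmod 4$, as otherwise $\Omega_{n+q}\otimes\Q = 0$ and the bound is vacuous. In that case $\Omega_{n+q}\otimes\Q$ is the vector space dual to the span of Pontryagin numbers $p_{i_1}\cdots p_{i_r}[M]$ with $i_1+\cdots+i_r=(n+q)/4$, indexed by partitions of $(n+q)/4$. The homomorphism $\si_{N,q}^O\otimes\Q$ has image inside the subspace of $\Omega_{n+q}\otimes\Q$ spanned by those cobordism classes all of whose Pontryagin numbers involving a factor $p_j$ with $j>\lfloor(q+1)/2\rfloor$ vanish. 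Dually, the rank of the image is at most the dimension of the space of monomials $p_{i_1}\cdots p_{i_r}$ of weight $(n+q)/4$ in which every index $i_s$ satisfies $i_s\le (q+1)/2$ — equivalently, at most the number of partitions of $(n+q)/4$ with all parts $\le (q+1)/2$. (Here I use that the Pontryagin numbers pair $\Omega_*\otimes\Q$ with the polynomial algebra on the $p_i$, and that the condition $p_j=0$ for $j>\lfloor(q+1)/2\rfloor$ on the bundle $\eta^{q+1}\cong_{\mathrm{st}} TQ^{n+q}$ kills precisely the monomials containing a large-index factor.) This gives the displayed rank estimate.

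For the "in particular" clause, when $n>q+2$ one has $(n+q)/4 > (q+1)/2$, so the partition consisting of a single part of size $(n+q)/4$ — which corresponds to the Pontryagin number $p_{(n+q)/4}[M]$, nonzero for instance on a suitable generator — has a part exceeding $(q+1)/2$ and hence is \emph{not} counted in the bound. Thus the number of admissible partitions is strictly smaller than the total number of partitions of $(n+q)/4$, i.e.\ strictly less than $\dim_\Q\Omega_{n+q}\otimes\Q$, so $\si_{N,q}^O\otimes\Q$ cannot be surjective. (If $(n+q)/4$ is not an integer, $n+q\not\equiv 0\pmod 4$ forces $\Omega_{n+q}\otimes\Q=0$ and surjectivity fails trivially — or rather holds vacuously — so the content is genuinely in the $4\mid n+q$ case.)

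The main obstacle I anticipate is making the duality argument in the middle paragraph fully precise: one must check that "$p_j(\xi)=0$ for $j$ large" really does cut out exactly the span dual to the small-part partitions, rather than merely being contained in it, and one must be careful that the relevant generator detecting $p_{(n+q)/4}$ (e.g.\ a multiple of $\CP^{(n+q)/2}$) genuinely has the claimed nonzero Pontryagin number and is not accidentally expressible via admissible monomials. Handling $w_1$ and orientability (so that we are working in $\Omega_*$ and not $\mathfrak N_*$) is routine given that $\eta^{q+1}\cong_{\mathrm{st}}TQ^{n+q}$ and $Q^{n+q}$ is assumed orientable; the Stiefel–Whitney side plays no role in the rational statement.
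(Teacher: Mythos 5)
Your argument is essentially identical to the paper's proof: both rest on \cite[Lemma 3.1.]{Sa1} giving $TQ^{n+q}\oplus\ep^1\cong f^*TN^n\oplus\eta^{q+1}$, use stable parallelizability of $N^n$ to conclude $p_i(Q^{n+q})=0$ for $i>(q+1)/2$, and deduce that only Pontryagin numbers indexed by partitions with parts $\le (q+1)/2$ can be nonzero; the paper simply states this more tersely and leaves the duality bookkeeping and the ``in particular'' clause (which your inequality $(n+q)/4>(q+1)/2$ handles correctly) implicit. The only caveats you raise are standard facts about the Pontryagin-number pairing and an imprecision already present in the paper's statement when $4\nmid n+q$, so there is no substantive difference.
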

\begin{proof}
The statement follows from the fact that
the possible non-zero Pontryagin numbers of the source manifold of
a representative $f \co Q^{n+q} \to N^n$ of an element in  $\CC ob_{N, f}^{O}(n+q,-q)$
are in bijection with the partitions of $(n+q)/4$ where each number in a partition is
less than or equal to $(q+1)/2$. This holds because by 
\cite[Lemma 3.1.]{Sa1} the bundle $TQ^{n+q} \oplus \ep^1$  is isomorphic to
the bundle $f^*TN^n \oplus \eta^{q+1}$ for some $(q+1)$-dimensional bundle $\eta^{q+1}$.
Hence the Pontryagin classes $p_i(Q^{n+q})$ are zero for $i > (q+1)/2$.
\end{proof}

\begin{cor}
Let $N^n$ be a stably parallelizable manifold\textup{.}
\begin{enumerate}[\textup{(}\rm 1\textup{)}]
\item
The orientable $(n+2)$-dimensional manifolds which have fold map into $N^n$ 
generate a subgroup of rank at most $1$ in the cobordism group of $(n+2)$-dimensional manifolds\textup{.}
\item
Let $n = 4k-2$\textup{.}
Let $Q^{4k}$ be a $(4k)$-dimensional oriented manifold which has a fold map into the 
stably parallelizable manifold $N^{4k-2}$\textup{.}
Then\textup{,} the signature $\si(Q^{4k})$ of $Q^{4k}$ is equal to
$\frac{2^{2k} B_k}{(2k)!} (-1)^{k+1} \langle p_1^k(Q^{4k}), [Q^{4k}] \rangle$\textup{,} where
$B_k$ denotes the $k${\rm th} Bernoulli number\textup{.}
\item
Let $n = 4k-1$\textup{.}
If $Q^{4k}$ has a fold map into $N^{4k-1}$ such that the singular set $S_f$ is orientable\textup{,} then
the same holds for the signature of $Q^{4k}$ as above\textup{.}
\end{enumerate}
\end{cor}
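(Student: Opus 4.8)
The plan is to derive all three parts from the bundle relation $TQ^{n+q}\oplus\ep^1\cong f^*TN^n\oplus\eta^{q+1}$ of \cite[Lemma 3.1.]{Sa1}, used exactly as in the proof of Proposition~\ref{rankbecsles}, and then to specialize to small $q$. For part (1) I take $q=2$. The orientable $(n+2)$-dimensional manifolds admitting a fold map into $N^n$ are precisely those representing classes in the image of $\si_{N,2}^{O}$, hence the subgroup they generate sits inside $\mathrm{Im}(\si_{N,2}^{O})$ and has rank at most $\mathrm{rank}\,\mathrm{Im}(\si_{N,2}^{O})$. By Proposition~\ref{rankbecsles} this is bounded by the number of partitions of $(n+2)/4$ into parts not exceeding $(q+1)/2=3/2$; every such part must equal $1$, so there is at most one partition, and the rank is at most $1$ (and it is $0$ unless $4\mid n+2$).

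For part (2) I again take $q=2$, so that \cite[Lemma 3.1.]{Sa1} together with the stable parallelizability of $N^{4k-2}$ forces $p_i(Q^{4k})=0$ for $i\ge 2$, leaving $p_1$ as the only possibly non-zero Pontryagin class. The Hirzebruch signature theorem then gives $\si(Q^{4k})=\langle L_k(p_1,\dots,p_k),[Q^{4k}]\rangle$, and since the only monomial of the Hirzebruch $L$-polynomial $L_k$ surviving when $p_2=\dots=p_k=0$ is $p_1^k$, this equals $c_k\langle p_1^k(Q^{4k}),[Q^{4k}]\rangle$, where $c_k$ is the coefficient of $p_1^k$ in $L_k$. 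It remains to compute $c_k$, which I would do by substituting the formal values $p_1=t$, $p_i=0$ ($i\ge 2$) into the multiplicative $L$-sequence: this amounts to reading off the coefficient of $t^k$ in $\frac{\sqrt t}{\tanh\sqrt t}=1+\sum_{m\ge 1}(-1)^{m+1}\frac{2^{2m}B_m}{(2m)!}\,t^m$, which gives $c_k=(-1)^{k+1}\frac{2^{2k}B_k}{(2k)!}$, exactly the asserted constant.

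For part (3), where $n=4k-1$ forces $q=1$, Proposition~\ref{rankbecsles} is not directly applicable, but the argument of part (2) goes through once we have the bundle isomorphism $TQ^{4k}\oplus\ep^1\cong f^*TN^{4k-1}\oplus\eta^{2}$; in this codimension-one (odd $q$) setting this is precisely where the hypothesis that $S_f$ is orientable enters, just as in the proof of Proposition~\ref{sourcezero}. Granting it, $p_i(Q^{4k})=0$ for $i\ge 2$ as before, and the signature theorem yields the same formula for $\si(Q^{4k})$.

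The only genuinely non-formal step in the whole argument is pinning down the coefficient $c_k$ of $p_1^k$ in $L_k$; everything else is bookkeeping with Proposition~\ref{rankbecsles}, \cite[Lemma 3.1.]{Sa1} and the signature theorem. I expect that computation --- together with a clean statement of why orientability of $S_f$ is the exact hypothesis needed to invoke \cite[Lemma 3.1.]{Sa1} when $q=1$ --- to be the main point requiring care.
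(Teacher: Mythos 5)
Your proof is correct and takes essentially the same route as the paper, whose own proof consists of citing Proposition~\ref{rankbecsles} for part (1) and the Hirzebruch signature theorem together with the vanishing of $p_i(Q^{4k})$ for $i>1$ for parts (2) and (3). Your explicit extraction of the coefficient of $p_1^k$ in $L_k$ from $\sqrt{t}/\tanh\sqrt{t}$ and your remark that orientability of $S_f$ is what licenses the bundle isomorphism of \cite[Lemma 3.1.]{Sa1} in the odd-codimension case $q=1$ are precisely the details the paper leaves implicit.
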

\begin{proof}
(1) follows from Proposition~\ref{rankbecsles}. (2) and (3) follow from the 
Hirzebruch signature theorem (see, for example, \cite[Lemma 19.1. and Theorem 19.4.]{MiSta}) and 
the fact that the Pontryagin classes $p_i(Q^{4k})$ are zero for $i > 1$.
\end{proof}

For other results about the signatures of source manifolds of fold maps, see, for example, \cite{Sa5, SaYa}.

\section{Subgroups of the cobordism (semi)group of fold maps}\label{subgroups}

In this section we extend the results of Theorems~\ref{ori} and \ref{unori}.

Let $O(1,k)$ denote the subgroup of the orthogonal group $O(k+1)$ 
whose elements are of the form $\begin{pmatrix}
1 & 0 \\ 0 & M 
\end{pmatrix}$
where $M$ is an element of the group $O(k)$.

\begin{theorem}
For $q > 1$\textup{,} the cobordism semigroup $\CC ob_{N,f}(n+q,-q)$ contains
the direct sum 
\[
\{ \dot N, S^1 \} \oplus \{ \dot N, SB(O(1) \x O(q)) \} \oplus
\bigoplus_{2 \leq \la < (q+1)/2} 
\{ \dot N, S^1 \} \oplus \{ \dot N, 
SBO(q+1-\la) \}
\]
as a direct summand\textup{.} 
\end{theorem}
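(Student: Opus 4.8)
The plan is to follow the proof of Theorems~\ref{ori} and~\ref{unori} from \cite{Kal4}, splitting off not only the ``framing part'' $\csi_{\la,q,1}^N$ of the invariants $\csi_{\la,q}^N$ but also their ``classifying-space part'' $\csi_{\la,q,2}^N$. First I would assemble a single invariant: for $0\le\la<(q+1)/2$ one has $\csi_{\la,q}^N=\csi_{\la,q,1}^N\oplus\csi_{\la,q,2}^N$ with values in $\{\dot N,S^1\}\oplus\{\dot N,SB(O(\la)\x O(q+1-\la))\}$, and for $\la\ge 2$ the forgetting homomorphism $\varrho_{\la,q}^N$ retracts the second factor onto its direct summand $\{\dot N,SBO(q+1-\la)\}$. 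Then
\[
\Phi \;:=\; \csi_{0,q,1}^N \;\oplus\; \csi_{1,q,2}^N \;\oplus\; \bigoplus_{2\le\la<(q+1)/2}\bigl(\varrho_{\la,q}^N\circ\csi_{\la,q}^N\bigr)
\]
is a semigroup homomorphism from $\CC ob_{N,f}(n+q,-q)$ onto the group in the statement, and it suffices to produce a homomorphism $s$ in the opposite direction with $\Phi\circ s=\mathrm{id}$; such a retract is automatically a direct summand, exactly as in \cite{Kal4}.

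The section $s$ is built componentwise by realising prescribed immersion data as singular sets of fold maps. The copies of $\{\dot N,S^1\}$ are already handled by Theorems~\ref{ori} and~\ref{unori}, whose fold maps have index-$\la$ singular set a given framed codimension-one immersion and, after a generic perturbation, no other singularities. The new ingredient is the realisation of the $BO$-data. Given a codimension-one immersion $j\co V^{n-1}\to N^n$ with trivialised normal bundle, together with a line bundle $\al^1$ and a $q$-dimensional bundle $\be^q$ over $V$ in the case $\la=1$, resp.\ a $(q+1-\la)$-dimensional bundle $\be^{q+1-\la}$ over $V$ in the case $2\le\la<(q+1)/2$, I would take a tubular neighbourhood $V\x(-1,1)$ of $j$ in $N^n$ and fibre over $V$ a one-parameter family of Morse functions on a $(q+1)$-dimensional manifold with boundary, each with a single critical point of index $\la$, chosen so that the positive Hessian eigenbundle of that critical point is $\be$ and (when $\la=1$) the negative eigenbundle is $\al^1$, the remaining eigenbundle being trivial; then glue this family fibrewise over $V$ to a trivial submersion with a fixed closed $q$-dimensional fibre over $N^n\setminus(V\x(-1,1))$. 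This produces a fold map $f\co Q^{n+q}\to N^n$ whose only singularities form an index-$\la$ fold immersed as $j(V)$, with fold germ bundle $\csi_\la(f)$ carrying the prescribed data. Cobordant input yields cobordant $f$, and disjoint unions are carried to disjoint unions, so each assignment is a semigroup homomorphism $s_\la$; together they define $s$.

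Checking $\Phi\circ s=\mathrm{id}$ is then formal. By construction the composite $\csi_{\la,q}^N\circ s_\la$ recovers the input class: its framing part $\csi_{\la,q,1}^N\circ s_\la$ vanishes because the underlying immersion is null in $\{\dot N,S^1\}$ (the input lies in the reduced summand $\{\dot N,SB(\cdot)\}$), while the classifying-space part gives back $(\al^1,\be^q)$, resp.\ $\be^{q+1-\la}$ after applying $\varrho_{\la,q}^N$. The remaining components of $\Phi$ vanish on the image of $s_\la$, since the fold maps $f$ have no index-$\mu$ singularities for $\mu\ne\la$. Hence $\Phi\circ s$ is the identity of the target group, which is therefore a direct summand of $\CC ob_{N,f}(n+q,-q)$.

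The technical heart --- and the reason the cases $\la=1$ and $\la\ge 2$ behave differently --- is the middle step: over an arbitrary base $V^{n-1}$ one must build a fibred family of Morse functions with a single index-$\la$ critical point and prescribed eigenbundles, and glue it to a product submersion over the complement without creating extra singularities and with one closed fibre $F$ serving over all of $N^n$ (one may have to stabilise $F$, for instance to the boundary of a suitable handlebody, to make room for the prescribed bundle). When $\la=1$ the attaching sphere of the critical point is $S^0$, and letting the two points be permuted according to $\al^1$ and framed by $\be^q$ realises the whole group $\{\dot N,SB(O(1)\x O(q))\}$. For $\la\ge 2$ a single index-$\la$ critical-point family allows one to prescribe only the normal framing $\be^{q+1-\la}$ of the $(\la-1)$-dimensional attaching sphere, which is precisely why one must pass to $\{\dot N,SBO(q+1-\la)\}$ via $\varrho_{\la,q}^N$. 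Making this construction precise, and in particular controlling the regular fibres across $\del\bigl(V\x(-1,1)\bigr)$, is where the argument of \cite{Kal4} has to be imitated and extended.
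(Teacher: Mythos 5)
Your overall strategy --- detecting the summand with the invariants $\csi_{\la,q}^N$ (composed with $\varrho_{\la,q}^N$ for $\la\ge 2$) and splitting them by realising prescribed immersion data as the singular set of a fold map fibred over the immersed tubular neighbourhood --- is indeed the paper's strategy, and your bookkeeping of which summands survive is essentially right. But the construction you propose for the section $s$ is not the paper's, and as described it fails at the step you yourself flag as the technical heart. You take as fibre a $(q+1)$-manifold with boundary carrying a Morse function with a \emph{single} critical point of index $\la$, and you glue this family to a trivial submersion with one fixed closed $q$-dimensional fibre $F$ over $N^n\setminus(V\x(-1,1))$. Two things go wrong. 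First, crossing a critical value of index $\la$ changes the regular fibre by a $\la$-surgery, so the regular fibres over $V\x\{-1\}$ and $V\x\{+1\}$ are not diffeomorphic and cannot both be matched to the same $F$; stabilising $F$ does not help, since the two sides differ for every choice. Second, a generic codimension-one immersion $j(V)$ cuts $N^n$ into several regions across whose walls the fibre would have to keep changing by surgeries, so there is no single product submersion over the complement at all.

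The paper sidesteps the gluing problem entirely: the fibre is the \emph{closed} sphere $S^{q+1}$ with a Morse function $h_j\co S^{q+1}\to(-\ep,\ep)$ having four critical points, of indices $0$, $j-1$, $j$, $q+1$, mapped into the fibre $(-\ep,\ep)$ of the trivialised normal bundle. The source manifold is then a closed $S^{q+1}$-bundle over $V$ mapped into the immersed tubular neighbourhood, with empty preimage over the rest of $N^n$, so nothing needs to be glued over the complement; the price of the extra singular sets of indices $0$, $j-1$, $q+1$ is paid in the final bookkeeping. The second ingredient your sketch lacks is the precise symmetry computation (Lemma~\ref{kiterj}): only the subgroup $O(1)\x O(q)$ (for $j=1$), respectively $O(1,j-1)\x O(q+1-j)$ (for $j\ge 2$), of automorphisms of the fold germ extends to automorphisms of the whole function $h_j$. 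This is what makes the homomorphisms $\al_{j,q}^N$ well defined with exactly the structure groups appearing in the statement, and it is the actual reason one must retract onto $\{ \dot N, SBO(q+1-\la) \}$ via $\varrho_{\la,q}^N$ for $\la\ge 2$; your heuristic about attaching spheres points in the right direction but is not a substitute for this lemma, and without a working fibred construction there is nothing for it to apply to.
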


\begin{proof}
Similarly to \cite{Kal4} we construct fibrations of Morse functions of $(q+1)$-spheres over
the framed immersions in $\imm^{\ep^1_{BO(q+1-\la)}}_N(n-1,1) \cong \{ \dot N, S^1 \} 
\oplus \{ \dot N, SBO(q+1-\la) \}$ with the appropriate orthogonal group as symmetry group.

For $1 \leq j < (q+1)/2$, 
let $h_j \co S^{q+1} \to (-\ep,\ep)$ be a Morse function of the $(q+1)$-dimensional sphere into the
open interval $(-\ep,\ep)$
with four critical points of index 
$0, j-1, j, q+1$, respectively, such that the critical value of index $j$ is zero in the interval $(-\ep,\ep)$.

In the following lemma we are looking for automorphisms of the Morse function $h_j$, i.e., 
auto-diffeomorphisms of the $(q+1)$-sphere which commute with the function $h_j$ $(1 \leq j < (q+1)/2)$.

\begin{lemma}\label{kiterj}
Let $1 \leq j < (q+1)/2$\textup{.}
There exists an identification of the 
Morse function $h_j$ around its critical point of index $j$
with the fold germ
\[
g_{j,q}(x_1,\ldots,x_{q+1})=(-x_1^2 - \cdots -x_{j}^2 + x_{1+j}^2 + \cdots + x_{1+q}^2),
\]
such that under this identification
\begin{enumerate}[\textup{(}\rm 1\textup{)}]
\item
any automorphism in the automorphism group $O(1) \x O(q)$ \textup{(}in the case of $j=1$\textup{)}
\item
any automorphism in the automorphism group 
$O(1,j-1) \x O(q+1-j)$ 
\textup{(}in the case of $j>1$\textup{)} 
\end{enumerate}
of the fold germ $g_{j,q}$
can be extended to an automorphism of the Morse function $h_j$\textup{.}
\end{lemma}
\begin{proof}
Let $1 \leq j < (q+1)/2$. Recall that the Morse function $h_j$ determines a 
handlebody decomposition of the $(q+1)$-sphere in the well-known way.
We give a model of the $(q+1)$-sphere, its Morse function $h_j$ and its handlebody decomposition 
which shows the statement of the lemma.

First let us suppose that $j>1$. 
Let us identify the neighbourhood of the critical point of index $0$ of $h_j$
with the hemisphere $D^{q+1} \subset S^{q+1}$ whose center is the critical point of index $0$.
Let us identify the interior of the other hemisphere with the Euclidean space $\R^{q+1}$.
We can choose this identification so that for the critical points of $h_j$ and
the corresponding handles of the handlebody decomposition of the $(q+1)$-sphere we have the following.
\begin{enumerate}
\item
The critical point of index $j-1$ of $h_j$ corresponds to 
the point $(-1, 0, \ldots, 0)$ in $\R^{q+1}$, 
\item
the $(j-1)$-handle
corresponds to a $(j-1)$-dimensional affine subspace $L^{j-1}$ in $\R^{q+1}$ which
is orthogonal to the line $\langle x_1 \rangle$
and contains the point $(-1, 0, \ldots, 0)$,
\item
the critical point of index $j$ of $h_j$
corresponds to the origin in $\R^{q+1}$,
\item
the $j$-handle corresponds to the half space $H_+ = \{ (x_1,\ldots,x_{q+1}) : x_1 \geq -1 \}$,
\item
the critical point of index $q+1$ of $h_j$ corresponds to 
the point $(-2, 0, \ldots, 0)$ in $\R^{q+1}$ 
and 
\item
the $(q+1)$-handle corresponds to the half space $H_- = \{ (x_1,\ldots,x_{q+1}) : x_1 \leq -1 \}$.
\end{enumerate}

Now we can see that the negative definite part of the fold germ $g_{j}$ 
corresponds to $J = (L^{j-1} \oplus \langle x_1 \rangle) \cap H_+$
and the positive definite part corresponds to the 
orthogonal complement of $J$, which lies in the
hyperplane $H = \{ x_1 = 0 \}$.
It is easy to check that an orthogonal transformation $O$ of $\R^{q+1}$ can be extended
to an automorphism of the Morse function $h_j$ in this model if and only if
the hyperplane $H$ and the affine subspace $L^{j-1}$ are invariant subspaces 
of the transformation $O$ and the line $\langle x_1 \rangle$ is fixed pointwise.
The statement of the lemma follows.

In the case of $j=1$ using the handle decomposition of the $(q+1)$-sphere determined by the Morse function
$h_1$ 
it is easy to check that the claimed symmetries exist.
\end{proof}

Let us define homomorphisms 
$\al_{1,q}^N \co \imm^{\ep^1_{B(O(1) \x O(q))}}_N(n-1,1) \to \CC ob_{N,f}(n+q,-q)$
and $\al_{j,q}^N \co \imm^{\ep^1_{B(O(1,j-1) \x O(q+1-j))}}_N(n-1,1) \to \CC ob_{N,f}(n+q,-q)$
 $(2 \leq j < (q+1)/2)$ similarly to \cite{Kal4} as follows.
 Let $[g \co M^{n-1} \to N^{n}]$ be an element of $\imm^{\ep^1_{B(O(1) \x O(q))}}_N(n-1,1)$.
Then the normal bundle of the immersion $g \co M^{n-1} \to N^{n}$ is 
induced from the trivial line bundle $\ep^1_{B(O(1) \x O(q))}$.
Let $\al_{1,q}^N([g])$ be 
 the cobordism class of the fold map which we obtain by constructing the fibration
 of the Morse function $h_1$ over the manifold $M^{n-1}$ with the same symmetry group as
that of the normal bundle of $g \co M^{n-1} \to N^{n}$ induced from the bundle $\ep^1_{B(O(1) \x O(q))}$,
in a way analogous to the method described in \cite{Kal4}.
 By Lemma~\ref{kiterj} the homomorphism $\al_{1,q}^N$ is well-defined.
 The definition of the homomorphisms $\al_{j,q}^N$ for $j>1$ is analogous.  
  
 Now we can see that the composition
\[
\csi_{{1}, q}^N \circ \al_{1,q}^N \co  \imm^{\ep^1_{B(O(1) \x O(q))}}_N(n-1,1)
\to \imm^{\ep^1_{B(O(1) \x O(q))}}_N(n-1,1)
\]
is the identity map and
the composition
\[
\varrho_{{j}, q}^N \circ 
\csi_{{j}, q}^N \circ \al_{j,q}^N \co  \imm^{\ep^1_{B(O(1,j-1) \x O(q+1-j))}}_N(n-1,1)
\to \imm^{\ep^1_{BO(q+1-j)}}_N(n-1,1)
\]
restricted to the direct summand $\imm^{\ep^1_{BO(q+1-j)}}_N(n-1,1)$ of
\[\imm^{\ep^1_{B(O(1,j-1) \x O(q+1-j))}}_N(n-1,1)\]
is the identity map as well $(j < (q+1)/2)$.

The remaining steps of the proof are obvious analogues of \cite{Kal4}.
\end{proof}

\begin{remark}
It follows that the composition
\[
\csi_{{j}, q}^N \circ \al_{j,q}^N \co \imm^{\ep^1_{B(O(1,j-1) \x O(q+1-j))}}_N(n-1,1) \to
\imm^{\ep^1_{B(O(j) \x O(q+1-j))}}_N(n-1,1)
\]
 is equal to the natural homomorphism 
\begin{multline*}
\be_{j,*} \co
\{ \dot N, S^1 \} \oplus \{ \dot N, SB(O(1,j-1) \x O(q+1-j)) \} 
\longrightarrow \\ \{ \dot N, S^1 \} \oplus \{ \dot N, SB(O(j) \x O(q+1-j)) \} 
\end{multline*}
induced by the map $\be_j \co BO(1,j-1) \to BO(j)$
$(2 \leq j < (q+1)/2)$.
Therefore if the map $\be_{j,*}$ is injective or an isomorphism, then the
cobordism semigroup $\CC ob_{N,f}(n+q,-q)$ contains
the group $\{ \dot N, SB(O(1,j-1) \x O(q+1-j)) \}$ as a subgroup or as a direct summand, respectively.

For example, in the case of $n=2$ and $N^2 = \R^2$, we have that
the cobordism group $\CC ob_{f}(2+q,-q)$ contains the direct sum
\[
\bigoplus_{1 \leq j < (q+1)/2} \pi_1^s \oplus \pi_1^s (B(O(1,j-1) \x O(q+1-j)))
= \Z_2^{3\lfloor q/2 \rfloor}
\]
as a direct summand, where $O(1,0)$ denotes the orthogonal group $O(1)$.
Similarly, the cobordism group $\CC ob_{f}^O(2+q,-q)$
contains the group $\Z_2^{2\lfloor q/2 \rfloor}$ as a direct summand.
\end{remark}

\end{document}